\newcommand{\SG}{\mathrm{SG}}
\newcommand{\g}{\mathrm{g}}
\newcommand{\F}{\mathrm{F}}
\newcommand{\G}{\mathrm{G}}
\newcommand{\m}{\mathrm{m}}
\newcommand{\Ap}{\mathrm{Ap}}
\newcommand{\Q}{\mathbb{Q}}
\newcommand{\N}{\mathbb{N}}
\newcommand{\K}{\mathbb{K}}
\newcommand{\dsum}{\displaystyle\sum}
\newtheorem{theo}{Theorem}
\newtheorem{defi}[theo]{Definition}
\newtheorem{ex}[theo]{Example}
\newtheorem{cor}[theo]{Corollary}
\newtheorem{prop}[theo]{Proposition}
\newtheorem{lemma}[theo]{Lemma}
\title{Irreducible numerical semigroups with multiplicity three and four}
\author{V\'ictor Blanco}
\address{Departamento de \'Algebra, Universidad de Granada}
\email{vblanco@ugr.es}
\date{\today}
\keywords{Numerical Semigroups, Irreducibility, Multiplicity, Ap\'ery set}
\subjclass[2010]{20M14, 11P21, 11D75, 13H10}
\begin{document}
\maketitle
\begin{abstract}
In this paper we analyze the irreducibility of numerical semigroups with multiplicity up to four. Our approach uses the notion of Kunz-coordinates vector of a numerical semigroup recently introduced in \cite{siam11}. With this tool we also completely describe the whole family of minimal decompositions into irreducible numerical semigroups with the same multiplicity for this set of numerical semigroups.
We give detailed examples to show the applicability of the methodology and conditions for the irreducibility of well-known families of numerical semigroups as those that are generated by a generalized arithmetic
progression.
\end{abstract}

\section{Introduction}
\label{sec:0}
A numerical semigroup is a subset $S$ of $\N$ (here $\N$ denotes the set of non-negative integers) closed under
addition, containing zero and such that $\N\backslash S$ is finite.
 Numerical semigroups were first considered while studying the
set of nonnegative solutions of Diophantine equations and their study is closely related to the analysis of monomial curves (see \cite{delorme}).
By these reasons, the theory of numerical semigroups has attracted a number of researchers from the algebraic community. For instance, some terminology from algebraic geometry has been exported to this field as the multiplicity (the smallest positive integer belonging to the semigroup), the genus (the number of nonnegative integers not belonging to the semigroup), or the embedding dimension (the cardinal of the minimal system of generators of the semigroup). Further details about the theory of numerical semigroups can be found in the recent monograph by Rosales and Garc\'ia-S\'anchez \cite{springer}.

A numerical semigroup is said irreducible if it cannot be expressed as an intersection of two numerical semigroups containing it properly. This notion was introduced in \cite{rosales03} where it is also shown that the family of irreducible numerical semigroups is the union of two families of numerical semigroups with special importance in this theory: symmetric and pseudo-symmetric numerical semigroups. The Frobenius number of a numerical semigroup is the largest integer not belonging to the semigroup. Then, symmetric (resp. pseudo-symmetric) numerical are those irreducible numerical semigroups with odd (resp. even) Frobenius number (see \cite{barucci97,froberg87}).

The irreducibility of a numerical semigroup have been widely studied in the literature (see \cite{branco-nuno07,rosales02,rosales02b,rosales03,rosales03b,rosales04}).
Furthermore, apart from the theory of semigroups, this notion is connected with commutative ring theory. In fact, let $S$ be a numerical semigroup, $\K$ a field, and
$\K[[t]]$ the ring of formal power series over $\K$. It is well-known  that $\K[[S]]=\{\dsum_{s\in S} a_s t^s: a_s \in \K\}$ is a subring of $\K[[t]]$, called the ring
of the semigroup associated to $S$ (see for instance, \cite{barucci97}). Properties over the numerical semigroup $S$ are translated to the ring associated to $S$. Actually, it is well-known that
 if the numerical semigroup is symmetric, the ring associated to it is a Gorenstein ring (see \cite{kunz1}) and if the semigroup is pseudo-symmetric, the ring is a Kunz ring (see \cite{barucci-froberg}).

In \cite{ijac11}, it is introduced the notion of $m$-irreducibility, which extends the concept of irreducibility when the multiplicity is fixed. A numerical semigroup with multiplicity $m$ is said $m$-irreducible if it cannot be expressed  as an intersection of two numerical semigroups with multiplicity  $m$ and containing it properly. In \cite{ijac11} apart from introducing this notion, the set of $m$-irreducible numerical semigroups is characterized in terms of its special gaps and also by its genus and Frobenius number. An interesting problem
when treating the irreducibility of a numerical semigroup is to minimally decompose a numerical semigroup into ($m$-)irreducible ones, in the sense that the decomposition involves the minimum number of semigroups.
 In \cite{ijac11} it is also given an algorithm to compute such a decomposition. A different approach, by applying integer programming tools, is proposed in \cite{siam11} to compute  more efficiently (in polynomial time) minimal decompositions of numerical semigroups with multiplicity
$m$, into $m$-irreducible numerical semigroups. In that approach it is used the notion of Kunz-coordinates vector
to translate the considered problem in the problem of finding some integer optimal solutions, with respect to appropriate objective
 functions, in a Kunz polytope. The encoding of a numerical semigroup as a integer vector was first considered in \cite{kunz} and \cite{london02} and it is based on the Ap\'ery set codification of a semigroup with respect to its multiplicity. This useful tool has been also applied to compute the number of numerical semigroups with a given genus \cite{counting}.

Here, we analyze the irreducibility of the family of numerical semigroups with multiplicities $3$ and $4$. The characterizations of this set of numerical semigroups in terms of the Frobenius number
,the genus or the ratio  is studied in \cite{rosales05}. Note that the case when the multiplicity of the semigroup is $2$ is trivial since any numerical semigroup with multiplicity $2$ is symmetric (see \cite{rosales05}) and then irreducible. Although in general the notions of irreducibility and $m$-irreducibility are different, we prove
that these notions coincide when $m$ is three or four (and obviously for $m=2$). Furthermore, we give explicit and simple conditions over the Kunz-coordinates vector of a numerical semigroups to be irreducible in these cases. Then, for a given numerical semigroup with multiplicity $3$ (resp. $4$), we describe minimal decompositions into $3$-irreducible (resp. $4$-irreducible) numerical semigroups.

We apply this approach to analyze some subfamilies of numerical semigroups with multiplicities $3$ or $4$: $3$ and $4$-symmetric numerical semigroups, $3$ and $4$-pseudosymmetric numerical semigroups or semigroups generated by generalized arithmetic sequences.

In Section \ref{sec:1} we recall the main definitions and results needed through this paper. Section \ref{sec:2}
 is devoted to analyze the family of numerical semigroups with multiplicity $3$. We characterize in this section the set of $3$-irreducible numerical semigroups in terms of its
 Kunz-coordinates vector, and we explicitly describe the minimal decomposition of any numerical semigroup with multiplicity $3$ into irreducible numerical semigroups. An analogous analysis is done in Section \ref{sec:3} for the case when the multiplicity is four.

\section{Preliminaries}
\label{sec:1}
A numerical semigroup is a subset $S$ of $\N$ closed under
addition, containing zero and such that $\N\backslash S$ is finite. The reader is referred to the recent monograph by Rosales and Garc\'ia-S\'anchez \cite{springer}
for further details about the theory of numerical semigroups.

The multiplicity of a numerical semigroup $S$ is the smallest non zero element belonging to it, and it is usually denoted by $\m(S)$. A numerical semigroup
 $S$ is said irreducible (resp. $m$-irreducible) if it cannot be expressed as an intersection of two numerical semigroups (resp. numerical semigroups with
 multiplicity $m$) containing it properly. In \cite{ijac11} the authors characterize the set of $m$-irreducible numerical semigroups for any positive integer $m$.
For the sake of completeness, we recall some of these characterizations that will be useful for the development done in this paper.

For any numerical semigroup $S$, the Frobenius number of $S$, $\F(S)$, is the largest integer not belonging to $S$, and the genus of
 $S$, $\g(S)$, is the number of nonnegative integers that do not belong to $S$. The following result completely determines the set of
$m$-irreducible numerical semigroups. Here $\lceil q \rceil$ stands for the ceiling integer part of any $q \in \Q$.

\begin{lemma}[Proposition 6 in \cite{ijac11}]
\label{lemma:1}
A numerical semigroup, $S$, with multiplicity $m$ is $m$-irreducible if and only if one of the following conditions holds:
\begin{enumerate}
\item $S=\{x \in \N: x\ge m\} \cup \{0\}$.
\item $S=\{x \in \N: x\ge m, x\neq \F(S)\} \cup \{0\}$.
\item $S$ is an irreducible numerical semigroup.
\end{enumerate}
\end{lemma}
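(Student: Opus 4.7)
The plan is to prove the two implications separately, using as the key technical tool the notion of special gaps of $S$, namely $\SG(S)=\{h\in\N\setminus S : S\cup\{h\}\text{ is a numerical semigroup}\}$, together with the standard observation that the (maximal) proper extensions of $S$ within the family of numerical semigroups are exactly the sets of the form $S\cup\{h\}$ with $h\in\SG(S)$.

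For the $(\Leftarrow)$ direction I would treat the three cases separately. In case (3) irreducibility trivially implies $m$-irreducibility since every decomposition into multiplicity-$m$ semigroups is in particular a decomposition into numerical semigroups. In case (1), any numerical semigroup $T$ with $S\subsetneq T$ must contain some element of $\N\setminus S=\{1,\dots,m-1\}$, forcing $\m(T)<m$; hence $S$ cannot be written as an intersection of two proper numerical semigroups of multiplicity $m$. In case (2), the gap set is $\{1,\dots,m-1\}\cup\{\F(S)\}$, and any extension $T\supsetneq S$ with $\m(T)=m$ cannot pick up any of $1,\dots,m-1$, hence must contain $\F(S)$; thus any two such extensions $T_1,T_2$ satisfy $\F(S)\in T_1\cap T_2$, so $T_1\cap T_2\neq S$.

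For the $(\Rightarrow)$ direction I would argue by contrapositive: assume $S$ is neither irreducible nor of the form in (1) or (2), and produce a decomposition into two numerical semigroups of multiplicity $m$ properly containing $S$. The failure of irreducibility together with the extension lemma above gives two distinct special gaps $h_1,h_2\in\SG(S)$ such that $S=(S\cup\{h_1\})\cap(S\cup\{h_2\})$. The exclusion of cases (1) and (2) forces $\SG(S)$ to contain at least two elements that are $\ge m$ (and hence distinct from the forbidden small values $1,\dots,m-1$); I would use that $\F(S)\in\SG(S)$ is always a special gap and that, outside of cases (1) and (2), there is at least one further special gap $h\ge m$. Picking $h_1,h_2\ge m$ in the decomposition then ensures $\m(S\cup\{h_i\})=m$, contradicting $m$-irreducibility.

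The main obstacle is the bookkeeping in the forward direction: one needs to guarantee that the two special gaps witnessing reducibility can be chosen to lie above $m$ rather than among $\{1,\dots,m-1\}$, and to verify that the semigroups of the form listed in (1) and (2) are precisely those for which this fails. This amounts to showing that if $\SG(S)\setminus\{1,\dots,m-1\}=\{\F(S)\}$ or is empty, then $S$ takes one of the two explicit shapes; this is a direct combinatorial check once the Ap\'ery-set / gap structure is made explicit, and it is the reason the statement naturally separates into the three listed cases.
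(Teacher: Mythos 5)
The paper itself gives no proof of this statement (it is quoted as Proposition~6 of \cite{ijac11}), so your argument has to stand on its own. Its overall architecture is sound: the three cases of the backward implication are handled correctly, and in the forward direction the reduction of reducibility to special gaps (taking $h_i=\max(T_i\setminus S)$ so that $S=(S\cup\{h_1\})\cap(S\cup\{h_2\})$ with $h_1\neq h_2\in\SG(S)$, and noting that $S\cup\{h\}$ has multiplicity $m$ exactly when $h>m$) is the standard and correct route.

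The genuine gap is the step you defer as ``a direct combinatorial check''. As you state it --- if $\SG(S)\setminus\{1,\dots,m-1\}=\{\F(S)\}$ or is empty, then $S$ has one of the two explicit shapes --- the claim is false: every irreducible numerical semigroup of multiplicity $m$ other than those two shapes satisfies $\SG(S)=\{\F(S)\}$; for instance $S=\langle 3,7,11\rangle$ has $\SG(S)=\{8\}$ but is neither $\{0,3,\rightarrow\}$ nor $\{0\}\cup\{x\geq 3:x\neq \F(S)\}$. What you actually need, and must use, is the standing non-irreducibility hypothesis of your contrapositive: if $S$ is not irreducible and $\#\SG_m(S)\leq 1$, then $S$ has a special gap $h<m$ (so $2h\in S$ and $h+s\in S$ for every nonzero $s\in S$), and one must show this forces $S$ to have no gap strictly between $m$ and $\F(S)$. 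That verification is the real content of the lemma, not bookkeeping. It can be done, e.g., as follows: let $g$ be the largest gap with $m<g<\F(S)$; since $h$ is pseudo-Frobenius, $\F(S)-h$ is a gap; if $\F(S)-g\in S$ then $\F(S)-g\geq m>h$, while if $\F(S)-g\notin S$ then $g$ is pseudo-Frobenius and $\#\SG_m(S)\leq 1$ forces $2g=\F(S)$, again giving $\F(S)-g=g>h$; in either case $\F(S)-h$ is a gap strictly between $g$ and $\F(S)$, contradicting the maximality of $g$. Without an argument of this kind (and with the key claim phrased in a form that drops the non-irreducibility hypothesis and is therefore false), the forward implication is not established. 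A cosmetic point: the sets $S\cup\{h\}$ with $h\in\SG(S)$ are the \emph{minimal} proper oversemigroups of $S$, not the maximal ones.
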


From the above result is easy to obtain the next lemma.

\begin{lemma}
 \label{lemma:2}
		Let $S$ be a numerical semigroup with multiplicity $m$. Then, $S$ is $m$-irreducible if and only if $\g(S) = \left\{m-1, m,
\left\lceil \dfrac{\F(S)+1}{2} \right\rceil\right\}$.
\end{lemma}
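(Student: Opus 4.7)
The plan is to apply Lemma \ref{lemma:1} directly: I will go through its three cases, compute the genus in each, and then reverse the implications using the standard genus characterization of symmetric and pseudo-symmetric semigroups.

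For the forward direction, assume $S$ is $m$-irreducible and consider the three mutually (almost) exclusive cases of Lemma \ref{lemma:1}. In case (1), the set of gaps $\N\setminus S$ is precisely $\{1,\ldots,m-1\}$, so $\g(S)=m-1$. In case (2), the gaps are $\{1,\ldots,m-1\}\cup\{\F(S)\}$; since $m\in S$ we have $\F(S)>m$, hence this union is disjoint and $\g(S)=m$. In case (3), $S$ is either symmetric or pseudo-symmetric; by the classical characterization (Fr\"oberg--Gottlieb--H\"aggkvist, see \cite{froberg87,barucci97}), a numerical semigroup is symmetric iff $\g(S)=\frac{\F(S)+1}{2}$ (with $\F(S)$ odd) and pseudo-symmetric iff $\g(S)=\frac{\F(S)+2}{2}$ (with $\F(S)$ even). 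In both situations $\g(S)=\left\lceil \frac{\F(S)+1}{2}\right\rceil$.

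For the converse, assume $\g(S)$ equals one of the three listed values and show $S$ falls into one of the cases of Lemma \ref{lemma:1}. Since $\m(S)=m$, the integers $1,\ldots,m-1$ are always gaps, providing $m-1$ forced gaps. If $\g(S)=m-1$, there are no further gaps, forcing $S=\{0\}\cup\{x\in\N : x\ge m\}$, i.e.\ case (1). If $\g(S)=m$, there is exactly one additional gap, and being the unique gap above $m-1$ it must be the largest one, namely $\F(S)$; this puts $S$ in case (2). Finally, if $\g(S)=\left\lceil \frac{\F(S)+1}{2}\right\rceil$, the same classical characterization recalled above tells us that $S$ is symmetric or pseudo-symmetric, hence irreducible, i.e.\ case (3).

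The only mildly delicate point is invoking the genus characterization of irreducibility in the converse, and noting that the three conditions are not mutually exclusive (for instance when $m=2$ they can coincide), but this causes no difficulty since we only need existence of some case of Lemma \ref{lemma:1} that $S$ satisfies. Everything else is a direct gap count, so no step should be an obstacle beyond citing the classical symmetric/pseudo-symmetric genus formulas.
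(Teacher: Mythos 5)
Your proof is correct and is essentially the argument the paper leaves implicit (it only remarks that the lemma "is easy to obtain" from Lemma \ref{lemma:1}): a genus count in cases (1) and (2) together with the classical genus characterization of symmetric and pseudo-symmetric semigroups for case (3), and the same ingredients run backwards for the converse. Note only that the displayed condition in the statement should be read as $\g(S) \in \left\{m-1,\, m,\, \left\lceil \frac{\F(S)+1}{2}\right\rceil\right\}$, which is how you correctly interpreted it.
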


Other useful set that appear when one analyzes irreducible numerical semigroups is the set of special gaps of a semigroup. Let $S$ be a numerical semigroup, the set of special gaps of $S$ is:
$$
\SG(S)=\{h \in \N\backslash S: S  \cup \{h\} \text{ is a numerical semigroup}\}
$$
If $\m(S)=m$, we denote by $\SG_m(S)=\{ h \in \SG(S): h > m\}$ the set of special gaps of $S$ larger than the multiplicity.
 Note that when $h \in \SG_m(S)$, $S'=S \cup \{h\}$ is a numerical semigroup with multiplicity $m$ and that $\F(S) \in SG_m(S)$ if $S \neq \{0, m, \rightarrow\}$.

Then, a $m$-irreducible numerical semigroup can be detected by counting the special gaps larger than $m$. We denote by $\{n_1, \ldots, n_k, \rightarrow\} = \{n_1, \ldots, n_k\} \cup \{n \in \N: n \ge n_k+1\}$ for any $n_1, \ldots, n_k \in \N$.
\begin{lemma}[\cite{ijac11}]
\label{lemma:3}
Let $S$ be a numerical semigroup with multiplicity $m$. Then, $S$ is $m$-irreducible if and only if $\#\SG_m(S) \leq 1$. Furthermore, $\SG_m(S)=\emptyset$ if and only if $S=\{0, m, \rightarrow\}$.
\end{lemma}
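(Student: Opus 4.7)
My plan is to prove both the biconditional and the ``furthermore'' clause by contrapositive arguments, exploiting the key observation that $\F(S)$ is always the largest special gap of $S$. For the ``furthermore'' clause, if $S\ne\{0,m,\rightarrow\}$ then some gap exceeds $m$, so $\F(S)>m$; and $S\cup\{\F(S)\}$ is always a semigroup (any $\F(S)+s$ with $s\in S\setminus\{0\}$ exceeds $\F(S)$ and therefore lies in $S$), showing $\F(S)\in\SG_m(S)\ne\emptyset$. The converse is immediate, since every gap of $\{0,m,\rightarrow\}$ lies below $m$.

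For the forward direction ($m$-irreducibility $\Rightarrow \#\SG_m(S)\le 1$), I argue contrapositively. Assume $S=S_1\cap S_2$ with each $S_i$ of multiplicity $m$ and strictly larger than $S$, and set $h_i:=\max(S_i\setminus S)$. The multiplicity-$m$ hypothesis on $S_i$ forces $h_i>m$, and a short maximality argument shows $S\cup\{h_i\}$ is a semigroup: for any $s\in S$, the element $h_i+s$ lies in $S_i$, and were it a gap of $S$ it would be a new element of $S_i\setminus S$ exceeding $h_i$, a contradiction. Hence $h_i\in\SG_m(S)$, and the two values must differ, since otherwise $h_1=h_2\in S_1\cap S_2=S$ would be absurd. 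This gives $\#\SG_m(S)\ge 2$.

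The reverse direction is essentially immediate: from distinct $h_1,h_2\in\SG_m(S)$ I build the decomposition $S=(S\cup\{h_1\})\cap(S\cup\{h_2\})$, observing that each factor is a numerical semigroup by definition of $\SG_m$, has multiplicity $m$ (because $h_i>m$), and properly contains $S$, while the intersection reduces to $S$ because $h_1\ne h_2$. Thus $S$ is not $m$-irreducible. The only moment requiring care is ensuring in the forward direction that the constructed $h_i$ lies in $\SG_m(S)$ and not merely in $\SG(S)$; but this is automatic from the multiplicity-$m$ assumption on each $S_i$, so I do not anticipate any substantive obstacle.
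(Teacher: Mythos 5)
Your proof is correct, and there is in fact no internal proof to compare it against: the paper states this lemma as a quoted result from \cite{ijac11} and never argues it, so your self-contained argument is welcome. It is also the standard one: the maximal element $h_i=\max(S_i\setminus S)$ of a proper multiplicity-$m$ oversemigroup is a special gap exceeding $m$ (here you correctly use that $m\in S$ to rule out $h_i=m$), distinctness of $h_1,h_2$ follows from $S_1\cap S_2=S$, and conversely two distinct elements of $\SG_m(S)$ give the decomposition $S=(S\cup\{h_1\})\cap(S\cup\{h_2\})$ into multiplicity-$m$ oversemigroups; the ``furthermore'' clause via $\F(S)\in\SG_m(S)$ whenever $S\neq\{0,m,\rightarrow\}$ matches the remark the paper makes just before stating the lemma. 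Two small repairs. First, your direction labels are swapped: the argument starting from $S=S_1\cap S_2$ is the contrapositive of ``$\#\SG_m(S)\le 1$ implies $m$-irreducible,'' while the explicit two-gap decomposition is the contrapositive of ``$m$-irreducible implies $\#\SG_m(S)\le 1$.'' Since both implications end up established the biconditional is complete, but relabel them. Second, when verifying that $S\cup\{h_i\}$ (and likewise $S\cup\{\F(S)\}$) is closed under addition, you only treat sums $h_i+s$ with $s\in S\setminus\{0\}$; the sum $2h_i$ must also be checked, and it is handled by the identical maximality argument ($2h_i\in S_i$ and $2h_i>h_i$, hence $2h_i\in S$), so add that line.
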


Once the set of $m$-irreducible numerical semigroups is characterized, one may be interested in decomposing a numerical semigroup as an intersection of $m$-irreducible numerical semigroups. Actually, any numerical semigroup with multiplicity $m$ can be decomposed into $m$-irreducible numerical semigroups (\cite[Proposition 1]{ijac11}). In that paper it is also given an algorithm for obtaining a minimal (in the sense of the minimum number of elements involved in the intersection) decomposition into $m$-irreducible numerical semigroup by using the Ap\'ery set of the numerical semigroup. In \cite{siam11}, some algorithms for obtaining such a minimal decomposition by formulating the problem as an integer programming model and by using the notion of Kunz-coordinated vector of a numerical semigroup.

\begin{defi}[Ap\'ery set and Kunz-coordinates]
\label{def:5}
Let $S$ be a numerical semigroup and $s \in S$.
\begin{enumerate}
\item The \emph{Ap\'ery set} of $S$ with respect to $s \in S$ is the set $\Ap(S,s) = \{w_0=0, w_1, \ldots, w_{s-1}\}$, where $w_i$ is the least
element in $S$ congruent with $i$ modulo $s$, for $i=1, \ldots, s-1$.
\item The \emph{Kunz-coordinates vector} of $S$ is the integer vector $x \in \N^{m-1}$ whose components are $x_i=\frac{w_i-i}{m}$ where $m=\m(S)$ and $\Ap(S, m)=\{w_0=0, w_1, \ldots, w_{m-1}\}$.
\end{enumerate}
\end{defi}
The Kunz-coordinates vector were implicity used in \cite{london02} and in \cite{counting} for counting numerical semigroups of a given genus. The importance of the Kunz-coordinates vector for representing a numerical semigroup is also shown in the following result.
\begin{lemma}[Theorem 11 in \cite{london02}]
\label{lemma:6}
Each numerical semigroup is one-to-one identified with its Kunz-coordinates.

Furthermore, the set of Kunz-coordinates vectors of the numerical semigroups with multiplicity $m$ is the set of solutions of the following system of diophantine inequalities:
\begin{align}
x_i  \geqslant&1 & \mbox{for all $i \in \{1, \ldots, m-1\}$,}\nonumber\\
x_i+x_j-x_{i+j} \geqslant& 0 & \mbox{for all $1 \leqslant i \leqslant j \leqslant m-1$, $i+j \leqslant m-1$,}\label{kunz}\\
x_i+x_j-x_{i+j-m}  \geqslant& -1 &\mbox{for all $1 \leqslant i \leqslant j \leqslant m-1$, $i+j > m$},\nonumber\\
x_i \in \N & &\mbox{for all $i \in \{1, \ldots, m-1\}$}\nonumber.
\end{align}
\end{lemma}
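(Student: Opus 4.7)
The plan is to prove the lemma in three stages: (a) verify that the Kunz-coordinates vector of any numerical semigroup of multiplicity $m$ satisfies the stated system, (b) show that any integer vector satisfying the system arises as Kunz-coordinates of a well-defined numerical semigroup of multiplicity $m$, and (c) observe that the resulting assignments are mutually inverse, giving the one-to-one correspondence.

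For (a), let $S$ have multiplicity $m$, let $\Ap(S,m) = \{w_0 = 0, w_1, \ldots, w_{m-1}\}$, and set $x_i = (w_i - i)/m$. The inequality $x_i \geq 1$ follows because $w_i$ is a positive element of $S$ congruent to $i \pmod m$, and since $m$ is the smallest nonzero element of $S$ one must have $w_i \geq m + i$. For the remaining two families of inequalities, use that $w_i + w_j \in S$ (closure under addition) and $w_i + w_j \equiv i + j \pmod m$. When $i + j \leq m-1$, minimality of $w_{i+j}$ in its class gives $w_i + w_j \geq w_{i+j}$, which rewrites as $x_i + x_j \geq x_{i+j}$; when $i + j > m$, minimality of $w_{i+j-m}$ gives $w_i + w_j \geq w_{i+j-m}$, and a direct substitution $w_k = mx_k + k$ turns this into $x_i + x_j - x_{i+j-m} \geq -1$.

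For (b), given an integer vector $x$ satisfying the system, define $w_0 = 0$, $w_i = mx_i + i$ for $1 \leq i \leq m-1$, and
\[
S = \{n \in \N : n \geq w_{n \bmod m}\}.
\]
Clearly $0 \in S$ and every positive multiple of $m$ is in $S$, while the inequalities $x_i \geq 1$ force $w_i > m$ for each $1 \leq i \leq m-1$, so no positive integer smaller than $m$ lies in $S$; hence $\m(S) = m$. Finiteness of $\N \setminus S$ is immediate since each residue class contributes only the finitely many integers below $w_i$. The core of this step is closure under addition: given $a, b \in S$ with $a \equiv i$, $b \equiv j \pmod m$, one checks in the four cases ($i$ or $j$ zero; $i,j \geq 1$ with $i+j \leq m-1$; $i+j = m$; $i+j > m$) that $a + b \geq w_{(a+b) \bmod m}$. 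The first and third cases are trivial (multiple of $m$ at least $m$), and the remaining two translate back precisely into the two non-trivial inequalities of the system. By construction, the least element of $S$ congruent to $i$ is exactly $w_i$, so $\Ap(S,m) = \{w_0, \ldots, w_{m-1}\}$ and the Kunz-coordinates of $S$ recover $x$.

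For (c), a numerical semigroup with multiplicity $m$ is determined by $\Ap(S,m)$ via $S = \Ap(S,m) + m\N$, and the Apéry set is in turn determined by the Kunz-coordinates through $w_i = mx_i + i$; combined with step (b), this yields the claimed bijection. I expect the main technical obstacle to be the addition-closure verification in step (b), specifically the bookkeeping in the case $i + j > m$, where the reduction of indices modulo $m$ interacts with the $-1$ on the right-hand side of the third family of inequalities; the rest of the argument is a direct translation between $w_i$'s and $x_i$'s.
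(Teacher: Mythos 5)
This lemma is not proved in the paper at all: it is imported verbatim as Theorem 11 of \cite{london02}, so there is no internal argument to compare yours against. Your proof is correct and is essentially the standard one from that reference: minimality of the Ap\'ery elements $w_i$ in their residue classes yields the three families of inequalities (with $x_i\geq 1$ coming from the fact that $w_i\equiv i \pmod m$ and $i\notin S$ forces $w_i\geq m+i$, which you note), and conversely the set $S=\{n\in\N: n\geq w_{n \bmod m}\}$ built from a solution vector is a numerical semigroup of multiplicity $m$ whose Ap\'ery set is $\{0,w_1,\dots,w_{m-1}\}$, the only nontrivial point being the four-case closure check, where the case $i+j>m$ uses exactly the inequality $x_i+x_j-x_{i+j-m}\geq -1$ after reducing the residue by $m$. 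Your step (c) correctly closes the loop via $S=\Ap(S,m)+m\N$, which is legitimate since $m\in S$. No gaps.
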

The bijective correspondence given in the above result between numerical semigroups with multiplicity $m$ is given by $(x_1, \ldots, x_{m-1}) \mapsto \langle mx_1+1, \ldots, mx_{m-1}+m-1\rangle$.

An integer vector $x \in \N^{m-1}$ is said a Kunz-coordinates vector if it is the Kunz-coordinates vector of some numerical semigroup with multiplicity $m$. Thus, being a Kunz-coordinates vector is equivalent to be a solution of the diophantine system of inequalities \eqref{kunz}.

The Frobenius number, the genus and the special gaps larger than the multiplicity of a numerical semigroup can be computed by manipulating the Kunz-coordinates vector of a numerical semigroup (see \cite{siam11}): Let $S$ be a numerical semigroup with multiplicity $m$ and $x\in \N^{m-1}$ its Kunz-coordinates vector. Then, by Selmer's formulas \cite{selmer77}, $\g(S)=\sum_{i=1}^{m-1} x_i$, $\F(S)=\max\{mx_i+i\}-m$ and
\begin{equation}
\label{sg}
\begin{array}{lll}
\SG_m(S)&=\{h_i=m(x_i-1)+i :& x_i+x_j > x_{i+j} \text{ for $j$ such that } i+j<m,\\
  & & x_i+x_j > x_{i+j-m}-1 \text{ for $j$ such that  } i+j>m,\\
  & &\text{and } 2h_i \ge mx_{2h_i \pmod m} + {2h_i \pmod m},\\
  & &i=1, \ldots, m-1\}.
  \end{array}
\end{equation}

Hence, by lemmas \ref{lemma:6} and \ref{lemma:2} and the expression of the genus and the Frobenius number of a numerical semigroup in terms of its Kunz-coordinates vector, the set of Kunz-coordinates vectors of all the $m$-irreducible numerical semigroups is the entire set of solutions of system \eqref{kunz} when adding the constraint $\dsum_{i=1}^{m-1} x_i \in  \{m-1, m, \max_i\;\{mx_i+i\}- m\}$.

We are interested in decomposing a numerical semigroup $S$ with multiplicity $m$ into $m$-irreducible numerical semigroups. Then, the $m$-irreducible numerical semigroups involved in such a decomposition can be found among the set of oversemigroups of $S$ with multiplicity $m$, that is, the set
$$
\mathcal{O}_m(S) = \{S' \text{ numerical semigroup}: S \subseteq S' \text{ and } \m(S')=m\}
$$
If $x \in \N^{m-1}$ is the Kunz-coordinates vector of $S$, the set $\mathcal{O}_m(S)$ is one-to-one identified, in terms of Kunz-coordinates vectors, with the set of \emph{undercoordinates} of $x$, that is, with the set
$$
\mathcal{U}_m(x) = \{x' \in \N^{m-1}: x' \text{ is a Kunz-coordinates vector and } x' \leq x\}
$$
where $\leq$ stands for the component-wise order in $\N^{m-1}$ (see \cite{ijac11}).

Thus, the undercoordinates of a Kunz-coordinates vector are in the form $x-y$ with $y\in\N^{m-1}$ and such that $x-y$ is a Kunz-coordinates vector. Since we are interested in decomposing a numerical semigroup $S$ whose Kunz-coordinates vector is $x\in \N^{m-1}$, into $m$-irreducible numerical semigroups, by applying \eqref{kunz} to $x-y$, the conditions for $y$ to be $x-y$ a $m$-irreducible Kunz-coordinates vector are:

\begin{align}
y_i \leqslant x_i - 1 & \mbox{ for all $i \in \{1, \ldots, m-1\}$,}\nonumber\\
y_i + y_j - y_{i+j} \leqslant x_i + x_j - x_{i+j} & \mbox{ for all $1 \leqslant i \leqslant j \leqslant m-1$, $i+j \leqslant m-1$,}\nonumber\\
y_i + y_j - y_{i+j} \leqslant x_i + x_j - x_{i+j} + 1 &\mbox{ for all $1 \leqslant i
\leqslant j \leqslant m-1$, $i+j > m$},\label{polytope:x}\tag{${\rm P}^m(x)$}\\
\dsum_{i=1}^{m-1} y_i \in M(x, y),\label{eq:disj}\\
y \in \N^{m-1}_+.\nonumber
\end{align}
where $M(x, y) = \{\dsum_{i=1}^{m-1} x_i -m, \dsum_{i=1}^{m-1} x_i -m + 1,$ $\dsum_{i=1}^{m-1} x_i -
$ $\left\lceil \frac{\max_i\{m(x_i-y_i) + i\} - m +1}{2} \right\rceil\}$.

If condition \eqref{eq:disj} is $\dsum_{i=1}^{m-1} y_i= \dsum_{i=1}^{m-1} x_i -m + 1,$, the unique solution is $x-y=(1, \ldots, 1)$, whose associated numerical semigroup is $S_m=\{0,m,\rightarrow\}$. $S_m$ is the maximal element in the set of numerical semigroups with multiplicity $m$. Hence, $S_m$ appears only in its own decomposition and in no one else.

By solving the above system of diophantine inequalities, we obtain a decomposition of $S$ into $m$-irreducible numerical semigroups, in terms of its Kunz-coordinates vectors. To get a minimal decomposition the entire set of solutions of \eqref{polytope:x} must be filtered conveniently to avoid redundant solutions. For designing such a filter we use the following result:
\begin{lemma}[\cite{ijac11}]
\label{lemma:7}
Let $S$ be numerical semigroups with multiplicity $m$ and $S_1, \ldots,$ $S_n \in \mathcal{O}_m(S)$. $S=S_{1} \cap \cdots \cap S_{n}$ if and only if $\SG_m(S) \cap \left(\G(S_{1}) \cup \cdots \cup \G(S_{n})\right) = \SG_m(S)$.
\end{lemma}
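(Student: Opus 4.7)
\medskip

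\noindent\textbf{Proof plan.} The plan is to pass to complements and reduce the desired equivalence to a statement about gaps, and then to exploit the maximality of a hypothetical ``extra'' element in the intersection to upgrade it to a special gap larger than $m$.

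\medskip

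First I would note that, since each $S_i \in \mathcal{O}_m(S)$ contains $S$, we always have $S \subseteq S_1 \cap \cdots \cap S_n$. Consequently $S = S_1 \cap \cdots \cap S_n$ if and only if $\G(S) = \G(S_1) \cup \cdots \cup \G(S_n)$, and because $\G(S_i) \subseteq \G(S)$ for every $i$, this equality reduces further to the inclusion $\G(S) \subseteq \G(S_1) \cup \cdots \cup \G(S_n)$. On the other hand, the hypothesis of the lemma is exactly the inclusion $\SG_m(S) \subseteq \G(S_1) \cup \cdots \cup \G(S_n)$. So the whole statement amounts to showing that these two inclusions are equivalent.

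The direction $(\Rightarrow)$ is immediate: $\SG_m(S) \subseteq \G(S)$, so if $\G(S)$ is covered by the $\G(S_i)$'s then $\SG_m(S)$ is covered as well.

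For the direction $(\Leftarrow)$ I would argue by contradiction. Suppose that $\SG_m(S)$ is covered by $\G(S_1) \cup \cdots \cup \G(S_n)$ but that $S \subsetneq T := S_1 \cap \cdots \cap S_n$. Then $T \setminus S$ is a non-empty subset of $\N \setminus S$, hence finite, and I can pick $h = \max (T \setminus S)$. Because every $S_i$ has multiplicity $m$, so does $T$, and thus $T \cap \{0,1,\dots,m\} = \{0,m\} = S \cap \{0,1,\dots,m\}$, forcing $h > m$. I then claim $h \in \SG(S)$, i.e.\ $S \cup \{h\}$ is a numerical semigroup: for every $s \in S \setminus \{0\}$ we have $h+s \in T$ (since $h, s \in T$ and $T$ is closed under addition) and $h + s > h$, so by the maximality of $h$ in $T \setminus S$ we must have $h + s \in S$; the same argument applied to $s = h$ gives $2h \in S$. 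Thus $h \in \SG_m(S)$. But $h \in T$ means $h \in S_i$ for every $i$, so $h \notin \G(S_1) \cup \cdots \cup \G(S_n)$, contradicting the hypothesis.

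\medskip

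\noindent The only subtle step is the claim that the maximum $h$ of $T \setminus S$ is a special gap of $S$; once one observes that anything strictly larger than $h$ which lies in $T$ must already lie in $S$, the verification that $h + s \in S$ for all non-zero $s \in S$ (and in particular $2h \in S$) is immediate from $T$ being closed under sums. Everything else is bookkeeping with complements.
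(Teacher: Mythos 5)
Your proof is correct and complete: the reduction to $\G(S)\subseteq \G(S_1)\cup\cdots\cup\G(S_n)$ is sound, and the key step — that $h=\max\bigl((S_1\cap\cdots\cap S_n)\setminus S\bigr)$ satisfies $h>m$, $2h\in S$ and $h+s\in S$ for all $s\in S\setminus\{0\}$, hence $h\in\SG_m(S)$ while $h$ lies in every $S_i$ — is exactly the right maximality argument. Note that the paper itself states this lemma without proof, citing \cite{ijac11}; your argument is the standard one used there, so there is nothing to flag.
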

From the above result, the problem of minimally decomposing a numerical semigroup with multiplicity $m$, $S$, into $m$-irreducible numerical semigroups is translated into the problem of finding a set of oversemigroups of $S$ that minimally covers the special gaps larger than $m$ of $S$. To check if a solution of \eqref{polytope:x} contains an specific special gap $h \in \SG_m(S)$, in \cite{siam11} it is proven the following result that analyzes the structure of the system in terms of the elements in $\SG_m(S)$.
\begin{lemma}
\label{lemma:8}
Let $S$ be a numerical semigroup with multiplicity $m$ and Kun-coordinates vector $x\in \N^{m-1}$. Then, there exists a minimal decomposition of $S$ into $m$-irreducible numerical semigroups $S=S_1 \cap \cdots \cap S_k$ with the following properties:
\begin{enumerate}
\item $h_i = \F(S_i) \in \SG_m(S)$.
\item If $x^i=x-y^i$ is the Kunz-coordinates vector of $S_i$, then $y^i_{h_{i} \pmod m}=0$, for $i=1, \ldots, k$.
\item $\dsum_{j=1}^{m-1} y^i_j = \left\{\begin{array}{rl} \dsum_{i=1}^{m-1} x_i - \left\lceil \dfrac{h_i+1}{2} \right\rceil & \mbox{if $h_i>2m$,}\\
 \dsum_{i=1}^{m-1} x_i -m & \mbox{if $h_i<2m$.}
 \end{array}\right.$, for $i=1, \ldots, k$.
 \end{enumerate}
 \end{lemma}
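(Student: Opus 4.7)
The plan is to start from an arbitrary minimal decomposition of $S$ into $m$-irreducible oversemigroups and surgically modify each member so that the three stated properties hold. Existence of a minimal decomposition $S = T_1 \cap \cdots \cap T_k$ follows from the results in \cite{ijac11}. Applying Lemma \ref{lemma:7} gives $\SG_m(S) \subseteq \bigcup_i \G(T_i)$, and minimality forces that for each $i$ there exists some $h_i \in \SG_m(S)$ lying in $\G(T_i) \setminus \bigcup_{j \neq i} \G(T_j)$; otherwise $T_i$ could be discarded without breaking the covering.

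The key step is then to replace each $T_i$ by an $m$-irreducible oversemigroup $S_i$ of $S$ with $\F(S_i) = h_i$, constructed according to the magnitude of $h_i$. If $m < h_i < 2m$, set $S_i = \{0, m, \rightarrow\} \setminus \{h_i\}$; this is $m$-irreducible by case~(2) of Lemma \ref{lemma:1}, has $\F(S_i) = h_i$, and contains $S$ because $S \subseteq \{0, m, \rightarrow\}$ and $h_i \notin S$. If $h_i > 2m$ (note that $h_i = 2m$ is impossible since $2m = m+m \in S$), take $S_i$ to be a maximal element of the finite non-empty set $\{S' : S \subseteq S',\ \m(S') = m,\ \F(S') = h_i\}$, whose non-emptiness is witnessed by $S \cup \{n \in \N : n > h_i\}$. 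Since $h_i \in \G(S_i)$ for every $i$, the family $\{S_i\}_{i=1}^k$ still covers $\SG_m(S)$ in the sense of Lemma \ref{lemma:7}, and minimality is preserved because each $h_i$ is covered only by $S_i$.

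With the $S_i$ in place, properties (1)--(3) reduce to bookkeeping via the Kunz-coordinates formulas. Property (1) is immediate. For property (2), both $h_i \in \SG_m(S) \subseteq \G(S)$ and $\F(S_i) = h_i$ yield $h_i = m(x_{h_i \bmod m} - 1) + (h_i \bmod m) = m(x^i_{h_i \bmod m} - 1) + (h_i \bmod m)$, hence $y^i_{h_i \bmod m} = x_{h_i \bmod m} - x^i_{h_i \bmod m} = 0$. Property (3) then follows from Selmer's identity $\sum_j y^i_j = \g(S) - \g(S_i)$ combined with Lemma \ref{lemma:2}: when $h_i < 2m$ the construction gives $\g(S_i) = m$, while when $h_i > 2m$ case~(2) of Lemma \ref{lemma:1} is unavailable (it requires $\F < 2m$), so $S_i$ must be irreducible with $\g(S_i) = \lceil (h_i+1)/2 \rceil$.

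The main obstacle is verifying that, for $h_i > 2m$, a maximal element of the set above is actually $m$-irreducible. This is an exchange argument grounded in Lemma \ref{lemma:3}. Suppose $\#\SG_m(S_i) \geq 2$, so some $h' \in \SG_m(S_i) \setminus \{h_i\}$ exists with $h' > m$ and $h' < \F(S_i) = h_i$. Then $S_i \cup \{h'\}$ is a numerical semigroup with multiplicity still $m$, Frobenius still $h_i$, and containing $S$, yet strictly larger than $S_i$ — contradicting maximality. Therefore $\#\SG_m(S_i) \leq 1$, and $S_i$ is $m$-irreducible as required.
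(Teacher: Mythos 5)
Your construction of the individual $S_i$ is fine: the case $m<h_i<2m$ via $\{0,m,\rightarrow\}\setminus\{h_i\}$, the exchange argument showing that a maximal element of $\{S'\supseteq S:\ \m(S')=m,\ \F(S')=h_i\}$ is $m$-irreducible, and the derivation of properties (2) and (3) from Selmer's formulas and Lemma \ref{lemma:2} are all correct. (For what it is worth, the paper does not prove this lemma at all — it is quoted from \cite{siam11} — so there is no in-paper proof to compare against.)

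The genuine gap is the sentence ``since $h_i\in\G(S_i)$ for every $i$, the family $\{S_i\}_{i=1}^k$ still covers $\SG_m(S)$''. Your replacement only guarantees that the \emph{private} gaps $h_1,\ldots,h_k$ stay covered. A minimal decomposition may cover some $h\in\SG_m(S)$ redundantly, by two or more of the $T_j$; such an $h$ is private to no member, hence is not among the $h_i$. Each new $S_i$ has $\F(S_i)=h_i$ and therefore contains every integer larger than $h_i$, so if a shared special gap $h$ exceeds all the chosen private gaps (for instance $h=\F(S)$ being a gap of two members whose private gaps are smaller), then $h\in S_1\cap\cdots\cap S_k$ and, by Lemma \ref{lemma:7}, the new family is not a decomposition of $S$ at all. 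Nothing in your argument excludes this configuration, and the obvious patch — taking $h_i=\max\bigl(\G(T_i)\cap\SG_m(S)\bigr)$ — does not close it either, because the maximal oversemigroup with Frobenius number $h_i$ need not retain the \emph{smaller} special gaps that $T_i$ covered. A correct argument must preserve, for each member, the whole set $\G(T_i)\cap\SG_m(S)$ it covers; in the Kunz-coordinates language of Proposition \ref{prop:9}, one must keep $y^i_{h\pmod m}=0$ for every such $h$ while enlarging $y^i$ to reach an $m$-irreducible undercoordinate, not merely fix one chosen gap per member.
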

Finally, we recall this useful result that also appears in \cite{siam11}:
\begin{prop}
\label{prop:9}
Let $x \in \N^{m-1}_+$ be a Kunz-coordinates vector, $y \in \N^{m-1}_+$ and $h \in \SG_m(x)$. If $x-y$ is a
 undercoordinate of $x$, then, $h \in \G(x-y)$ if and only if $y_{h \pmod m} =0$.
Furthermore, $\F(x-y)$ is the unique element in $\{h \in \SG_m{x}: h \pmod m  = \max\{i \in \{1, \ldots, m-1\}: y_i=0\}\}$.
\end{prop}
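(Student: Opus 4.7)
The plan is to prove the two parts of Proposition~\ref{prop:9} separately, leaning on formula~\eqref{sg} for the description of $\SG_m(x)$ and on Selmer's expression for the Frobenius number.

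For the first equivalence, I would begin from the observation displayed in \eqref{sg}, that every $h\in\SG_m(x)$ takes the form $h = m(x_i-1)+i$ with $i = h \bmod m$, so $h$ sits exactly one multiple of $m$ below the Apéry value $w_i = m x_i + i$ of $S$. Passing to the undercoordinate $x-y$, the $i$-th Apéry value of the associated semigroup becomes $w'_i = m(x_i-y_i)+i$, and its gaps congruent to $i$ modulo $m$ form the initial segment $\{i,\, m+i,\, \ldots,\, (x_i-y_i-1)m+i\}$. Membership of $h = m(x_i-1)+i$ in this segment reduces to the inequality $x_i - 1 \le x_i - y_i - 1$, equivalently $y_i \le 0$; since $y_i\in\N$ this forces $y_i = 0$, giving both directions of the equivalence at once.

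For the second assertion I would start from Selmer's formula $\F(x-y) = \max_i\{m(x_i-y_i)+i\} - m$ and let $i^{\ast}$ be an index realising the maximum, so that $\F(x-y) \equiv i^{\ast} \pmod{m}$. Feeding $\F(x-y)$ into the first part then yields $y_{i^{\ast}} = 0$ and $\F(x-y) = m(x_{i^{\ast}}-1) + i^{\ast} = h_{i^{\ast}}$, displaying $\F(x-y)$ as an element of $\SG_m(x)$. Uniqueness inside the displayed set is automatic once membership is known, since \eqref{sg} forces at most one element of $\SG_m(x)$ to lie in each nonzero residue class modulo $m$. To finish I would identify $i^{\ast}$ with $\max\{i:y_i=0\}$ by comparing, through the Kunz inequalities \eqref{kunz}, the candidate values $m x_j + j$ (at coordinates with $y_j = 0$) against the shrunken values $m(x_j - y_j) + j$ (at coordinates with $y_j \ge 1$, which lose at least $m$), concluding that the argmax must sit at the largest untouched index.

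The step I expect to be the main obstacle is precisely this identification $i^{\ast} = \max\{i:y_i=0\}$. Without additional structural hypotheses on $y$, the argmax of the shrunken Apéry values need not land at the largest zero-coordinate of $y$, so the proof has to exploit the Kunz system \eqref{kunz} carefully, playing $m x_j + j$ off against $m(x_{i^{\ast}} - y_{i^{\ast}}) + i^{\ast}$ via the submultiplicativity-type bounds, together with the implicit premise, guaranteed in the decomposition setting of Lemma~\ref{lemma:8}, that $\F(x-y)$ actually belongs to $\SG_m(x)$.
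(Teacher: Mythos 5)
The paper itself contains no proof of Proposition~\ref{prop:9}: it is merely recalled from \cite{siam11}, so there is no internal argument to measure yours against, and your attempt has to stand on its own. Your treatment of the first equivalence does stand: in the residue class $i=h\pmod m$ the gaps of the oversemigroup $x-y$ form the initial segment ending at $m(x_i-y_i-1)+i$, so $h=m(x_i-1)+i$ is a gap of $x-y$ exactly when $y_i\leq 0$, i.e.\ $y_i=0$; and the observation that $\SG_m(x)$ meets each nonzero residue class at most once settles uniqueness. That part is complete and correct.

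The second half is where the genuine gap lies, and it is exactly the step you flag: the identification of the argmax index $i^{\ast}$ with $\max\{i: y_i=0\}$. No amount of careful play with the Kunz inequalities \eqref{kunz} can close it, because the claim fails at the stated level of generality, and it fails even under the extra properties of Lemma~\ref{lemma:8} that you invoke as an implicit premise. Take $m=4$, $x=(3,2,1)$, so $S=\langle 4,7,10,13\rangle$ and $\SG_4(S)=\{6,9\}$, and take $y=(0,1,0)$; then $x-y=(3,1,1)$ is a valid undercoordinate, corresponding to the irreducible oversemigroup $\langle 4,6,7\rangle$. Here $\F(x-y)=9\in\SG_4(x)$ with $9\equiv 1\pmod 4$ and $y_1=0$, and the genus condition of Lemma~\ref{lemma:8} holds ($\sum y_i=1=6-\lceil 10/2\rceil$), yet $\max\{i: y_i=0\}=3$ and no element of $\SG_4(x)$ is congruent to $3$ modulo $4$: the set in the ``furthermore'' is empty, so $\F(x-y)$ cannot be its unique element. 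In other words, the obstacle you anticipated is a genuine obstruction, not a technical difficulty: the residue class of $\F(x-y)$ is governed by the largest index $i$ with $y_i=0$ \emph{whose candidate gap $m(x_i-1)+i$ actually lies in $\SG_m(x)$} (equivalently, by the hypotheses imposed on $y$ in \cite{siam11}), not by the largest zero coordinate of $y$ alone. As written, your plan for the second assertion cannot be completed; it would need this sharper hypothesis made explicit, after which the argument reduces essentially to your first part plus Selmer's formula.
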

\section{3-irreducible numerical semigroups}
\label{sec:2}
In this section we analyze the set of $3$-irreducible numerical semigroups, that is, the set of numerical semigroups with multiplicity $3$ and such that cannot be expressed as an intersection of numerical semigroups with multiplicity $3$. Once this set is described, we give explicit decomposition into $3$-irreducible numerical semigroups for any numerical semigroup with multiplicity three.

It is clear that every irreducible numerical semigroup with multiplicity $m$ is also $m$-irreducible. However, in general, the converse is not true. First we prove that both notions are equivalent when the multiplicity is three.
\begin{lemma}
\label{lemma:10}
Every $3$-irreducible numerical semigroup is irreducible.
\end{lemma}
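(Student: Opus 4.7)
The plan is to unfold the three cases of Lemma~\ref{lemma:1} (applied with $m=3$) and check irreducibility in each one, either by exhibiting the semigroup as symmetric/pseudo-symmetric or by appealing directly to the definition. Recall that a numerical semigroup $T$ is irreducible exactly when $\g(T)=\lceil(\F(T)+1)/2\rceil$, so once we know $\F(S)$ and $\g(S)$ for each type of $3$-irreducible $S$, irreducibility is an immediate numerical check.

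In case (3) of Lemma~\ref{lemma:1} there is nothing to prove. In case (1) the only candidate is $S=\{0,3,\rightarrow\}$, whose gaps are $\{1,2\}$, so $\F(S)=2$ and $\g(S)=2=\lceil(\F(S)+1)/2\rceil$; hence $S$ is pseudo-symmetric. In case (2), $S=\{0,3,\rightarrow\}\setminus\{\F(S)\}$, so its gaps are exactly $\{1,2,\F(S)\}$ and $\g(S)=3$. To finish we need to pin down the admissible values of $\F(S)$.

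The key small observation is that in case (2) one must have $\F(S)\in\{4,5\}$. Indeed, $\F(S)\geq 4$ since the multiplicity is $3$, and if $\F(S)\geq 6$ then $\F(S)-3\geq 3$ lies in $S$ (it is not the single removed gap $\F(S)$), so $3+(\F(S)-3)=\F(S)$ would also have to lie in $S$ by closure under addition, a contradiction. For $\F(S)=5$ we have $\lceil(\F(S)+1)/2\rceil=3=\g(S)$, so $S$ is symmetric; for $\F(S)=4$ we again get $\lceil(\F(S)+1)/2\rceil=3=\g(S)$, so $S$ is pseudo-symmetric. In either case $S$ is irreducible, which closes case (2) and hence the lemma.

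The only step that requires any argument at all is the bound $\F(S)\leq 5$ in case (2); everything else is numerical bookkeeping on the gap set. I do not expect a genuine obstacle here, since the smallness of the multiplicity forces the gap set to be essentially $\{1,2,\F(S)\}$ and the symmetric/pseudo-symmetric dichotomy does the rest. One could alternatively phrase the whole argument using Lemma~\ref{lemma:2} and the Kunz-coordinates description, but it does not shorten the proof.
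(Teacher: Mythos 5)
Your proof is correct and follows essentially the same route as the paper: unfold the three cases of Lemma~\ref{lemma:1} for $m=3$ and verify irreducibility via the genus--Frobenius criterion in each. The only difference is that you spell out the small argument that $\F(S)\in\{4,5\}$ in case (2), where the paper simply lists the two resulting semigroups $\langle 3,4\rangle$ and $\langle 3,5,7\rangle$ without comment.
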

\begin{proof}
Let $S$ be a $3$-irreducible numerical semigroup. By Lemma \ref{lemma:1} one of the following conditions must be satisfied:
\begin{enumerate}
\item $S=\{x \in \N: x\ge 3\} \cup \{0\}$. In this case $S=\langle 3,4,5 \rangle$, that is irreducible ($2=\g(S) = \left\lceil \frac{\F(S)+1}{2}\right\rceil = \left\lceil \frac{2+1}{2}\right\rceil = 2$.
\item $S=\{x \in \N: x\ge m, x\neq \F(S)\} \cup \{0\}$. In this case, either $S=\langle 3, 4\rangle$ or $S=\langle 3, 5, 7\rangle$. Both numerical semigroups are irreducible.
\item $S$ is an irreducible numerical semigroup. In this case we are done.
\end{enumerate}
\end{proof}

From the above lemma the conditions for a numerical semigroup with multiplicity $3$ to be $3$-irreducible are also valid to check if the numerical semigroup is irreducible. Furthermore, as a consequence of Proposition 1 in \cite{ijac11} and the above result, any numerical semigroup with multiplicity $3$ can be minimally decomposed into irreducible numerical semigroups with multiplicity $3$.

Let $S$ be a numerical semigroup with multiplicity $3$. Its Kunz-coordinates vector consists of a positive integer vector with two components $x=(x_1, x_2) \in \N^2$. Then, $S$ is $3$-irreducible if $(x_1, x_2)$ is a solution of any of the two systems described by system \eqref{kunz} when adding the constraint $\dsum_{i=1}^{m-1} x_i \in  \{m-1, m, \max_i\;\{mx_i+i\}- m\}$. Also, by Lemma \ref{lemma:3}, $S$ is $3$-irreducible if its set of special gaps larger than the multiplicity has $0$ or $1$ elements. In the following result we explicitly describe the set of special gaps greater than $3$ of $S$.
\begin{prop}
\label{prop:11}
Let $S$ be a numerical semigroup with multiplicity $3$ with Kunz coordinates vector $x=(x_1, x_2)$. Then, the set of special gaps larger than $3$ is:
$$
\SG_3(S) = \left\{\begin{array}{cl} \{\} & \mbox{if $x=(1,1)$},\\
\{3x_1-2\} & \mbox{if $2x_1 \geq x_2+2$, $x_1\geq 2$ and $2x_2\leq x_1$},\\
\{3x_2-1\} & \mbox{if $2x_2\geq x_1+1$, $x_2\geq 2$ and $2x_1 \leq x_2+1$,}\\
\{3x_1-2, 3x_2-1\} & \mbox{if $2x_1 \geq x_2+2$, $2x_2 \geq x_1+1$ and $x_1, x_2 \geq 2$}.\end{array}\right.
$$
\end{prop}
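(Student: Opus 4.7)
The strategy is to apply the explicit description of $\SG_m(S)$ given by formula~\eqref{sg} to the case $m=3$ and read off when each of the two candidate values $h_1=3x_1-2$ and $h_2=3x_2-1$ lies in $\SG_3(S)$. Since $m-1=2$, only the vectors $(x_1,x_2)$ arise, and the three types of conditions in \eqref{sg} become a short, finite list of inequalities.

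First I would treat $h_1=3(x_1-1)+1$. Being in $\SG_3(S)$ first requires $h_1>3$, i.e. $x_1\ge 2$. Next, from \eqref{sg} the condition ``$x_i+x_j>x_{i+j}$ for $i+j<m$'' with $i=1$ reduces (the only valid $j$ is $j=1$) to $2x_1>x_2$; the ``$i+j>m$'' condition contributes nothing since with $i=1$ only $j=2$ is available and then $i+j=3=m$ is excluded. Finally, the evaluation of $2h_1 \pmod 3$ gives $2h_1=6x_1-4 \equiv 2\pmod 3$, so the third condition $2h_1\ge 3x_{2}+2$ translates into $2x_1\ge x_2+2$. This last inequality implies $2x_1>x_2$, so the full criterion collapses to
\begin{equation*}
h_1\in \SG_3(S)\ \Longleftrightarrow\ x_1\ge 2\ \text{ and }\ 2x_1\ge x_2+2.
\end{equation*}

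The analogous computation for $h_2=3(x_2-1)+2$ gives $x_2\ge 2$ for $h_2>3$; the only relevant inequality from the ``$i+j>m$'' part (with $i=j=2$) is $2x_2>x_1-1$, i.e.\ $2x_2\ge x_1$; and $2h_2=6x_2-2\equiv 1\pmod 3$ yields $2h_2\ge 3x_1+1$, i.e.\ $2x_2\ge x_1+1$, which again subsumes the previous inequality. Hence
\begin{equation*}
h_2\in \SG_3(S)\ \Longleftrightarrow\ x_2\ge 2\ \text{ and }\ 2x_2\ge x_1+1.
\end{equation*}

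With these two equivalences the proof reduces to a case analysis on the truth values of ``$h_1\in\SG_3(S)$'' and ``$h_2\in\SG_3(S)$''. Both fail exactly when $x_1=x_2=1$ (using that the Kunz inequalities for $m=3$ force $2x_1\ge x_2$ and $x_1\le 2x_2+1$, so a failure of both equivalences together with either $x_1\ge 2$ or $x_2\ge 2$ cannot occur), which gives the first row. The remaining three rows correspond respectively to only $h_1$, only $h_2$, or both, being special gaps; the exclusive conditions ``$2x_2\le x_1$'' and ``$2x_1\le x_2+1$'' are just the integer negations of ``$2x_2\ge x_1+1$'' and ``$2x_1\ge x_2+2$''. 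The only delicate point is checking that the fourth combination (neither inequality of the ``iff'' holds while $(x_1,x_2)\ne(1,1)$) is ruled out by the Kunz constraints, which I expect to verify by the short argument that $2x_1\le x_2+1$ together with $2x_2\le x_1$ forces $3x_2\le 1$, incompatible with $x_2\ge 1$. That bookkeeping of when each inequality is strict versus non-strict and how the integer arithmetic passes between them is the one place where a slip is easy, and is the main (mild) obstacle in the proof.
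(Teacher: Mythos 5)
Your proposal is correct and follows essentially the same route as the paper: specialize formula \eqref{sg} to $m=3$, reduce membership of $h_1=3x_1-2$ and $h_2=3x_2-1$ in $\SG_3(S)$ to the inequalities $2x_1\ge x_2+2$ (with $x_1\ge 2$) and $2x_2\ge x_1+1$ (with $x_2\ge 2$), and then combine the cases. Your case analysis at the end (ruling out the ``neither'' case except at $(1,1)$ via the Kunz inequalities) is in fact spelled out more carefully than the paper's one-line ``by combining both possibilities''.
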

\begin{proof}
By the description of $\SG_3(S)$ in terms of its Kunz-coordinates vector \eqref{sg}, we only need to check if $3(x_1-1)+1 = 3x_1-2$ and $3(x_2-1)+2=3x_2-1$ are special gaps larger than the multiplicity or not. First, those elements must be greater than $m$, i.e., $x_1 \geq 2$ and $x_2\geq 2$.
\begin{itemize}
\item For $h_1=3x_1-2$, $h_1\in \SG_3(S)$ if and only if $x_1 + x_1 > x_2$ (to be in $M$) and $2\left(3(x_1-2)+1)\right) \ge 3x_2 +2$ since $h_1 \pmod 3 = 1$. Equivalently, if $2x_1 > x_2$ and $2x_1 \ge x_2+2$. Clearly, it is deduced that $x_1\geq 2$.
\item For $h_2=3x_2-1$, $h_2\in \SG_3(S)$ if and only if $x_2 + x_2 > x_1-1$ and $2\left(3(x_2-1)+1)\right) \ge 3x_1 +1$ since $h_2 \pmod 3 = 2$. Equivalently, if $2x_2 > x_1-1$ and $2x_2 \ge x_1+1$. We also have that $x_2\geq 2$.
\end{itemize}
By combining both possibilities we obtain the result.
\end{proof}

From the above result we can completely characterize, in terms of their Kunz.coordinates vectors, those numerical semigroups with multiplicity $3$ that are $3$-irreducible by applying Lemma \ref{lemma:3}.

\begin{theo}
\label{theo:10}
Let $S$ be a numerical semigroup with multiplicity $3$ and Kunz-coordinates vector $x=(x_1, x_2) \in \N^2$. Then, $S$ is irreducible if an only if one of the following conditions holds:
\begin{enumerate}
\item $x_1=x_2=1$,
\item $2x_1 \geq x_2+2$ and $2x_2\leq x_1$.
\item $2x_2 \geq x_1+1$ and $2x_1\leq x_2+1$.
\end{enumerate}
\end{theo}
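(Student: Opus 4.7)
The plan is to chain together Lemma~\ref{lemma:10}, Lemma~\ref{lemma:3} and Proposition~\ref{prop:11}. Lemma~\ref{lemma:10} collapses irreducibility into $3$-irreducibility in multiplicity three; Lemma~\ref{lemma:3} turns $3$-irreducibility into the numerical condition $\#\SG_3(S)\le 1$; and Proposition~\ref{prop:11} describes $\SG_3(S)$ explicitly through a four-way case split on $(x_1,x_2)$. Read together, the theorem should amount to singling out which of those four cases yield a set of size at most one and matching them with the three listed conditions.

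Concretely, I would first rewrite ``$S$ is irreducible'' as ``$\#\SG_3(S)\le 1$'' by quoting the two lemmas above. Then I would run through the four alternatives of Proposition~\ref{prop:11}: its cases (1), (2), (3) give $\#\SG_3(S)\in\{0,1\}$ and so correspond to irreducible semigroups, while case (4) produces two distinct special gaps and is therefore excluded. The matching is essentially verbatim: case (1) of the proposition is condition (i) of the theorem; case (2) of the proposition is condition (ii); and case (3) of the proposition is condition (iii).

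The one point I would actually pause on, and the only mild subtlety, is that Proposition~\ref{prop:11} carries the auxiliary constraints ``$x_1\ge 2$'' (in its case (2)) and ``$x_2\ge 2$'' (in its case (3)), which the theorem omits. These are redundant: Lemma~\ref{lemma:6} forces $x_1,x_2\ge 1$ for every Kunz-coordinates vector, so $2x_1\ge x_2+2\ge 3$ automatically yields $x_1\ge 2$ in case (ii); and in case (iii), if $x_2=1$ then the two stated inequalities collapse to force $(x_1,x_2)=(1,1)$, landing back in condition (i). Apart from this bookkeeping I do not expect a genuine obstacle, since the computational content has already been done in Proposition~\ref{prop:11} and the present theorem is essentially its repackaging.
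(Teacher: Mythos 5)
Your proposal is correct and follows exactly the route the paper takes (the paper gives no separate proof of Theorem~\ref{theo:10}, presenting it as an immediate consequence of Proposition~\ref{prop:11} together with Lemma~\ref{lemma:3} and Lemma~\ref{lemma:10}). Your extra check that the auxiliary bounds $x_1\ge 2$, $x_2\ge 2$ from Proposition~\ref{prop:11} can be dropped (being implied by $2x_1\ge x_2+2$, resp.\ collapsing to the case $(1,1)$) is a small point the paper leaves implicit, and it is handled correctly.
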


\begin{ex}[Numerical semigroups generated by a generalized arithmetic sequence]
\label{ex:11}
For $h, d, k$ positive integer such that $k \leq 2$ and $\gcd(d, 3)=1$, the numerical semigroup with multiplicity $3$, $S=\langle 3, 3h+d, 3h+2d, \ldots, 3+kd\rangle$ is said that is generated by a generalized arithmetic sequence. In \cite{ramirezalfonsin} it is proved that $\Ap(S, 3)=\{0, 3h\left\lceil\dfrac{1}{k}\right\rceil + d, 3h\left\lceil\dfrac{2}{k}\right\rceil + 2d\}$. Then, if $d=3D+1$ ($d\equiv 1 \pmod 3$), the Kunz-coordinates vector of $S$ is $x=(h\left\lceil \frac{1}{k}\right\rceil + D, h\left\lceil \frac{2}{k}\right\rceil + 2D)$, and  if $d = 3D+2$ ($d \equiv 2 \pmod 3$), $x=(h\left\lceil \frac{2}{k}\right\rceil + 2D+1, h\left\lceil \frac{1}{k}\right\rceil + D)$ otherwise. Then, $S$ is irreducible, by applying Theorem \ref{theo:10}, if and only if one of the following condition holds:
\begin{itemize}
\item $k=1$, ($S=\langle 3, 3h+d\rangle$).
\item $k= 2$ and $h=1$ (In this case, $S=\langle 3, 3+d, 3+2d \rangle$, in  particular, for $d=1$, $S=\{0, 3, \rightarrow\}$).
\end{itemize}
Furthermore, if $k=1$, by Selmer's formulas $\F(S)=6h+2d-3$ which is odd, so $S=\langle 3, 3h+d\rangle$ is symmetric. If $k=2$ and $h=1$, $\F(S)=2d$, so $S$ is always pesudosymmetric in this case.

This result has been previously proven by Matthews in \cite{mathews04} and partially by Estrada and L\'opez in \cite{estrada94}.
\end{ex}

In what follows we show how to decompose a numerical semigroup with multiplicity $3$ that is not irreducible into irreducible numerical semigroups with multiplicity three (the decomposition of a irreducible numerical semigroup into irreducible numerical semigroups is trivial). Assume that $S$ is a numerical semigroup that is not irreducible. By Lemma \ref{lemma:3}, $\#\SG_3(S)=2$, and then, if $x=(x_1, x_2) \in \N^2$ is the Kunz-coordintes vector of $S$, by Proposition \ref{prop:9}, $\SG_3(S)=\{3x_1-2, 3x_2-1\}$ with $2x_1 \geq x_2+2$, $2x_2 \geq x_1+1$ and $x_1, x_2 \geq 2$.

First, we characterize the set of $3$-irreducible oversemigroups of $S$ in terms of their Kunz-coordinates vectors. We denote here by $\mathcal{I}_3(S)$ the set of undercoordinates of the Kunz-coordinates vector of $S$ that are $3$-irreducible.

\begin{theo}
\label{theo:12}
Let $S$ be a numerical semigroup with multiplicity $3$ and such that $S$ is not irreducible. If $x=(x_1, x_2) \in \N^2$ is the Kunz-coordinates vector of $S$, then, the set of irreducible undercoordinates of $S$ with multiplicity $3$ is:
$$
\mathcal{I}_3(S) = \widehat{\mathcal{I}}_3(S) \cup \left\{\begin{array}{rl}
\{(1,1), (x_1, \frac{x_1-1}{2}), (\frac{x_2}{2}, x_2)\} & \mbox{ if $x_1$ is odd and $x_2$ is even,}\\
\{(1,1), (x_1, \frac{x_1-1}{2}), (\frac{x_2+1}{2}, x_2)\} & \mbox{ if $x_1, x_2$ are odd},\\
\{(1,1), (x_1, \frac{x_1}{2}), (\frac{x_2}{2}, x_2)\} & \mbox{ if $x_1, x_2$ are even,}\\
\{(1,1), (x_1, \frac{x_1}{2}), (\frac{x_2+1}{2}, x_2)\} & \mbox{ if $x_1$ is even and $x_2$ is odd,}
\end{array}\right.
$$
where $\widehat{\mathcal{I}}_3(S) = \left\{\begin{array}{rl} \{\{(2,1)\} & \mbox{if $x_1 \geq 2$, $x_2=1$,}\\
\{(1,2)\} & \mbox{if $x_2 \geq 2$, $x_1=1$,}\\
\{(1,2), (2,1) & \mbox{if $x_1, x_2 \geq 2$,}
\end{array}\right.$.
\end{theo}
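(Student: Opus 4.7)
The plan is to combine the classification of $3$-irreducible Kunz-coordinates vectors from Theorem \ref{theo:10} with the undercoordinate constraint $x'\leq x$ componentwise. For $m=3$ the Kunz inequalities \eqref{kunz} reduce to $x'_1,x'_2\geq 1$, $2x'_1\geq x'_2$, and $x'_1\leq 2x'_2+1$. Combining these with each of the three cases of Theorem \ref{theo:10} shows that every $3$-irreducible Kunz vector $(a,b)$ has one of the following parametric shapes: the unit $(1,1)$; a vector of the form $(2b,b)$ or $(2b+1,b)$ with $b\geq 1$; or a vector of the form $(a,2a-1)$ or $(a,2a)$ with $a\geq 1$. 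This reduction is the straightforward first step.

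Next I would intersect each parametric family with the undercoordinate inequalities $a\leq x_1$ and $b\leq x_2$. The small vectors $(1,1)$, $(2,1)$, $(1,2)$ live at the bottom of these families, and a direct check of which ones fit inside $(x_1,x_2)$ gives the piecewise definition of $\widehat{\mathcal{I}}_3(S)$; in particular when $x_1,x_2\geq 2$ (which holds in the non-irreducible case via Proposition \ref{prop:11}) one gets $\widehat{\mathcal{I}}_3(S)=\{(1,2),(2,1)\}$. For the horizontal family $(2b,b),(2b+1,b)$ I would push $b$ as high as the constraints allow; Proposition \ref{prop:11} guarantees $2x_2\geq x_1+1$, which implies that $b\leq x_2$ is slack whenever $2b\leq x_1$, so the binding condition is $b\leq\lfloor x_1/2\rfloor$. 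Symmetrically, the vertical family is constrained by $a\leq\lceil x_2/2\rceil$.

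Splitting on the parity of $x_1$ and $x_2$ then converts these floor and ceiling expressions into the four sub-cases of the theorem: $\lfloor x_1/2\rfloor$ equals $x_1/2$ or $(x_1-1)/2$, and $\lceil x_2/2\rceil$ equals $x_2/2$ or $(x_2+1)/2$, according to the parity. Assembling $(1,1)$ with the extremal horizontal and vertical vectors, and then with $\widehat{\mathcal{I}}_3(S)$, produces the right-hand side of the stated formula.

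The main obstacle I expect is the parity bookkeeping: one must verify in each of the four branches that the claimed extremal vector actually lies in the correct case of Theorem \ref{theo:10}, which comes down to checking inequalities such as $2x_1\geq\lfloor x_1/2\rfloor+2$ and $2\lfloor x_1/2\rfloor\leq x_1$ together with their symmetric counterparts for $x_2$, and also confirming that the small vectors in $\widehat{\mathcal{I}}_3(S)$ are not absorbed into the parametric families when $x_1$ or $x_2$ is close to the boundary values $2$ or $3$. These are elementary verifications, but they have to be done case by case.
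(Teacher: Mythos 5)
Your reduction of Theorem \ref{theo:10} to the two one-parameter families $(2b,b),(2b+1,b)$ and $(a,2a-1),(a,2a)$ is correct, and so is the observation that for a non-irreducible $x$ the constraints $b\le x_2$ (resp.\ $a\le x_1$) are slack, leaving $b\le\lfloor x_1/2\rfloor$ and $a\le\lceil x_2/2\rceil$. But the final step --- ``push $b$ as high as the constraints allow'' and then assemble only the extremal vectors --- is a genuine gap: your own intermediate conclusion says that \emph{every} $b\le\lfloor x_1/2\rfloor$ gives a family member lying below $x$, and nothing in the proposal justifies discarding the non-extremal ones. Indeed they cannot be discarded if $\mathcal{I}_3(S)$ is read literally as the set of all $3$-irreducible undercoordinates: for $x=(3,4)$, i.e.\ $S=\langle 3,10,14\rangle$, the vector $(2,3)$ satisfies $(2,3)\le(3,4)$ and condition (3) of Theorem \ref{theo:10}; it is the pseudo-symmetric oversemigroup $\langle 3,7,11\rangle\supseteq S$, yet it does not appear in the displayed formula (nor does, say, $(4,2)$ for $x=(7,4)$). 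So your enumeration, carried out to the end, yields a strictly larger set than the stated right-hand side, and the claimed equality cannot be reached along your route without an extra restriction that you neither state nor prove.

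This is also where your approach diverges from the paper's. The paper does not enumerate the irreducible region; it uses Proposition \ref{prop:9} and the genus condition coming from Lemma \ref{lemma:8} to compute precisely those irreducible oversemigroups whose Frobenius number is one of the special gaps $3x_1-2$, $3x_2-1$ (which forces $y_1=0$ or $y_2=0$ together with $y_1+y_2=x_1+x_2-\lceil\frac{h+1}{2}\rceil$), and then adjoins $(1,1)$, $(2,1)$, $(1,2)$ by hand; these are exactly the undercoordinates needed for the minimal decompositions of Corollary \ref{cor:13}. To make your argument land on the displayed formula you must insert that restriction explicitly --- keep only the family members whose first (resp.\ second) coordinate equals $x_1$ (resp.\ $x_2$), equivalently whose Frobenius number lies in $\SG_3(S)$ --- and acknowledge that the intermediate members found by your enumeration (such as $(2,3)$ above) are genuine irreducible undercoordinates that the formula, as used in the sequel, deliberately omits.
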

\begin{proof}
Let $S'$ be a irreducible oversemigroup of $S$. Then, it has Kunz-coordinates vector $(x_1', x_2') =(x_1, x_2)-(y_1, y_2)$  verifying the diophantine inequalities  in \eqref{polytope:x}. Then, for $m=3$, this system is:
\begin{align*}
2y_1-y_2 &\leq 2x_1-x_2\\
2y_2-y_1 &\leq 2x_2-x_1+1
\end{align*}
Let $S'$ be an irreducible oversemigroup of $S$ with multiplicity $3$. It is clear that at least one special gap in $\SG_3(S)$ does not belong to $S'$, otherwise $S'$ has the same special gaps larger than $3$ that $S$, being $S=S'$. We can distinguish here two cases:
\begin{enumerate}
\item $h_1=3x_1-2 \not\in S'$. By Proposition \ref{prop:9}, $h_1 \not\in S'$ if and only if $y_1=0$, and since $S'$ is $3$-irreducible $y_1+y_2 = x_1+x_2 - \left\lceil\frac{3x_1-1}{2}\right\rceil$, that is, $y_2 = x_1+x_2 - \left\lceil\frac{3x_1-1}{2}\right\rceil$.
    \item $h_2=3x_2-1 \not\in S'$. By Proposition \ref{prop:9}, $h_2 \not\in S'$ if and only if $y_2=0$, and since $S'$ is $3$-irreducible $y_1+y_2 = x_1+x_2 - \left\lceil\frac{3x_2}{2}\right\rceil$, that is, $y_1 = x_1+x_2 - \left\lceil\frac{3x_2}{2}\right\rceil$.
\end{enumerate}

Also, if $x_1 \geq 2$, $(2,1) \leq (x_1, x_2)$ is an irreducible undercoordinate of $S$, and if $x_2 \geq 2$, $(1,2) \leq (x_1,x_2)$ is in $\mathcal{I}_3(S)$.

Then, the set of irreducible oversemigroups of $S$ is given by the Kunz-coordinates vectors $(x_1, x_1+ \left\lceil\frac{3x_1-1}{2}\right\rceil)$, $(x_2 + \left\lceil\frac{3x_2}{2}\right\rceil, x_2)$, $(1,2)$, and $(2,1)$.

The result follows by expanding the ceiling par of these vectors and by adding the Kunz-coordinates vector $(1,1)$ which is a irreducible oversemigroup for any numerical semigroup with multiplicity $3$.
\end{proof}

We illustrate the usage of the above result in the following example.

\begin{ex}
Let $S = \langle 3, 10, 14 \rangle$. $S$ is a numerical semigroup with multiplicity $3$ and $\Ap(S, 3)=\{0, 10, 14\}$. Then, its Kunz-coordinates vector is $x=(\frac{10-1}{3}, \frac{4-1}{3})=(3,4)$.
Since $3$ is odd, $4$ is even, and $3, 4 \geq 2$, the set of irreducible undercoordinates of $S$ is $\{(1,2), (2,1)\} \cup \{(1,1), (3, \frac{3-1}{2}), (\frac{4}{2}, 4)\}=\{(1,1), (1,2), (2,1), (3,1), (2,4)\}$. Consequently, the set of irreducible oversemigroups of $S$ with multiplicity $3$ is:
$$
\{\langle 3, 4, 5\rangle, \langle 3, 4 \rangle, \langle 3, 5, 7 \rangle, \langle 3, 5 \rangle, \langle 3, 7 \rangle\}
$$
\end{ex}

 As a direct consequence of Theorem \ref{theo:12} and the identification between Kunz-coordinates vectors and numerical semigroups, we are able to describe minimal decompositions into $3$-irreducible numerical
 semigroups for a numerical semigroup with multiplicity $3$.

\begin{cor}
\label{cor:13}
Let $S$ be a numerical semigroup with multiplicity $3$ and $(x_1, x_2) \in \N^2$ its Kunz-coordinates vector. Then, either $S$ is irreducible or can be (minimally) decomposed into $3$-irreducible numerical semigroups as:
$$
S= \left\{\begin{array}{rl}
\langle 3, \frac{3x_1+1}{2} \rangle \cap \langle 3, \frac{3x_2+2}{2}\rangle  & \mbox{ if $x_1$ is odd and $x_2$ is even,}\\
\langle 3,  \frac{3x_1+1}{2} \rangle \cap \langle 3, \frac{3x_2+5}{2}, 3x_2+2 \rangle & \mbox{ if $x_1, x_2$ are odd,}\\
\langle 3,  3x_1+1, \frac{3x_1+4}{2} \rangle \cap \langle 3, \frac{3x_2+2}{2} \rangle & \mbox{ if $x_1, x_2$ are even,}\\
\langle 3,  3x_1+1, \frac{3x_1+4}{2} \rangle \cap \langle 3, \frac{3x_2+5}{2}, 3x_2+2 \rangle & \mbox{ if $x_1$ is even and $x_2$ is odd,}
\end{array}\right.
$$
\end{cor}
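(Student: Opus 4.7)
The plan is to deduce the corollary directly from Theorem \ref{theo:12} together with Lemma \ref{lemma:7}, reducing the statement to a translation between Kunz-coordinates vectors and the minimal generating sets of the associated numerical semigroups.

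First I would use Lemma \ref{lemma:3} and Proposition \ref{prop:11} to observe that if $S$ is not irreducible, then $\#\SG_3(S) = 2$, namely $\SG_3(S) = \{3x_1-2, 3x_2-1\}$. By Lemma \ref{lemma:7}, a set of oversemigroups $S_1, \ldots, S_k \in \mathcal{O}_3(S)$ gives a decomposition $S = S_1 \cap \cdots \cap S_k$ if and only if every special gap of $S$ larger than $3$ is a gap of at least one $S_i$. Since the two elements of $\SG_3(S)$ are congruent to $1$ and $2$ modulo $3$ respectively, and Proposition \ref{prop:9} tells us that $h \in \G(x-y)$ exactly when $y_{h \pmod 3} = 0$, no single $3$-irreducible oversemigroup can cover both special gaps; hence any decomposition into $3$-irreducible oversemigroups has length at least $2$, and an intersection of two such oversemigroups of the described form (one missing $3x_1-2$, the other missing $3x_2-1$) is already minimal.

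Next I would select the two required oversemigroups from the list in Theorem \ref{theo:12}. By Proposition \ref{prop:9}, among the irreducible undercoordinates, the one that guarantees $3x_1 - 2 \in \G(\cdot)$ is precisely the one whose first coordinate equals $x_1$ (i.e.\ $y_1=0$), namely $(x_1, \lceil (x_1-1)/2 \rceil)$ or $(x_1, \lceil x_1/2 \rceil)$ depending on the parity of $x_1$; similarly the one covering $3x_2 - 1$ is the one with second coordinate $x_2$. With the two parities combined in all four ways, one obtains exactly the four cases of the statement.

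Finally, I would apply the bijection $(z_1, z_2) \mapsto \langle 3,\, 3z_1+1,\, 3z_2+2\rangle$ from Lemma \ref{lemma:6} to each chosen undercoordinate and simplify the generator list by discarding redundant generators. For example, when $x_1$ is odd, the vector $(x_1, (x_1-1)/2)$ maps to $\langle 3,\, 3x_1+1,\, (3x_1+1)/2\rangle$, and since $3x_1+1 = 2 \cdot (3x_1+1)/2$ the generator $3x_1+1$ is redundant, leaving $\langle 3,\, (3x_1+1)/2\rangle$. In the cases where no such multiplicative relation occurs between the two generators, both must be retained. Working through the four parity combinations yields exactly the expressions of the statement.

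The main obstacle is purely bookkeeping: verifying in each of the four parity cases that the translation of the Kunz-coordinates via the bijection produces the minimal generating set written in the corollary, recognising when one of the generators becomes redundant modulo the other. Minimality of the decomposition itself is immediate from the gap-counting argument above.
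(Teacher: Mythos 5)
Your proposal is correct and follows essentially the same route as the paper: the paper states the corollary as a direct consequence of Theorem \ref{theo:12} together with the bijection $(z_1,z_2)\mapsto\langle 3,3z_1+1,3z_2+2\rangle$, which is exactly what you do, merely filling in the details (Proposition \ref{prop:9} to match each undercoordinate with the special gap it misses, Lemma \ref{lemma:7} for the covering condition, and the simplification of redundant generators) that the paper leaves implicit. The only point worth stating explicitly is that a single $3$-irreducible oversemigroup covering both gaps would force $y_1=y_2=0$, i.e.\ it would equal $S$ itself, contradicting that $S$ is not irreducible — this is the precise reason the decomposition needs (and with your choice attains) exactly two semigroups.
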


\begin{cor}
\label{cor:14}
Let $S$ be a numerical semigroup with multiplicity $3$. The decompositions of $S$ into irreducible numerical semigroups given in Corollary \ref{cor:13} are unique.
\end{cor}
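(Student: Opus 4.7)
The plan is to show that any minimal decomposition of $S$ into $3$-irreducible numerical semigroups is forced to coincide with the specific one given in Corollary \ref{cor:13}. If $S$ is already irreducible there is nothing to prove, so I assume $S$ is not irreducible. Then by Proposition \ref{prop:11}, $\SG_3(S)=\{h_1,h_2\}$ with $h_1=3x_1-2$ and $h_2=3x_2-1$, and the inequalities $x_1,x_2\ge 2$, $2x_1\ge x_2+2$, $2x_2\ge x_1+1$ hold.

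First I would pin down the shape of an arbitrary minimal decomposition. Let $S=S_1\cap\cdots\cap S_k$ be such a decomposition and write each $S_i=x-y^i$ in Kunz coordinates. By Proposition \ref{prop:9}, $h_1\in\G(S_i)$ iff $y^i_1=0$, and $h_2\in\G(S_i)$ iff $y^i_2=0$. No single $S_i$ can cover both special gaps (that would force $y^i=0$ and hence $S_i=S$, which is not $3$-irreducible by Lemma \ref{lemma:10}), and no $S_i$ can cover neither (such a component would be redundant by Lemma \ref{lemma:7}, contradicting minimality). Hence each $S_i$ has exactly one zero coordinate, both zero patterns must appear, and since Corollary \ref{cor:13} already exhibits a decomposition with $k=2$, minimality forces $k=2$, say with $y^1_1=0$ and $y^2_2=0$.

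The crux is then the uniqueness of the component $S_1$ whose first Kunz coordinate equals $x_1$; by symmetry the same argument handles $S_2$. Write the Kunz vector of $S_1$ as $(x_1,x'_2)$. Lemma \ref{lemma:6} requires $x_1\le 2x'_2+1$, giving the lower bound $x'_2\ge\lceil(x_1-1)/2\rceil$. Since $S_1$ is $3$-irreducible with $x_1\ge 2$, Theorem \ref{theo:10} leaves only its case (2): case (1) needs $x_1=1$, and case (3) needs $x'_2\ge 2x_1-1$, which is incompatible with $x'_2\le x_2\le 2x_1-2$. Case (2) produces the upper bound $x'_2\le\lfloor x_1/2\rfloor$. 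Since $\lceil(x_1-1)/2\rceil=\lfloor x_1/2\rfloor$ for every integer $x_1$, this pins $x'_2$ to a single value, which a parity check matches with the second coordinate listed in Corollary \ref{cor:13}.

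The only real point of care is exactly this combinatorial pinch, where the Kunz-validity lower bound and the irreducibility upper bound on the free coordinate meet at a single integer; the rest is bookkeeping with Propositions \ref{prop:9} and \ref{prop:11}. Once the squeeze is in place, the bijection between numerical semigroups of multiplicity $3$ and their Kunz vectors supplied by Lemma \ref{lemma:6} promotes the uniqueness at the level of vectors to uniqueness of the decomposition.
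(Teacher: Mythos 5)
Your argument is correct, but it takes a genuinely different route from the paper's. The paper's own proof is short and partly external: it notes that $\SG(S)=\SG_3(S)$ for $S\neq\{0,3,\rightarrow\}$, so a minimal decomposition must consist of two irreducible semigroups whose Frobenius numbers are $3x_1-2$ and $3x_2-1$; irreducibility then fixes their genus, and uniqueness is obtained by citing Corollary 4 of \cite{rosales05} (a numerical semigroup with multiplicity $3$ is determined by its Frobenius number and genus). You instead stay entirely inside the Kunz-coordinate machinery: Lemma \ref{lemma:7} and Proposition \ref{prop:9} force exactly two components, with zero patterns $y^1_1=0$ and $y^2_2=0$, and then the squeeze between the Kunz inequality $x_1\le 2x_2'+1$ from Lemma \ref{lemma:6} and case (2) of Theorem \ref{theo:10} (case (3) being excluded since $x_2'\le x_2\le 2x_1-2$ when $S$ is not irreducible) pins the free coordinate to $\lfloor x_1/2\rfloor$, matching Corollary \ref{cor:13}. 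The trade-off: the paper's proof is shorter but leans on an external uniqueness theorem, while yours is self-contained and in effect re-derives the relevant half of Theorem \ref{theo:12}. Two minor remarks: the second component is not a literal symmetry of the first (there the binding Kunz inequality is $2x_1'\ge x_2$ and it is case (3) of Theorem \ref{theo:10} that survives, giving $x_1'=\lceil x_2/2\rceil$), though the parallel computation does go through; and the paper's wording covers decompositions into arbitrary irreducible semigroups via $\SG(S)=\SG_3(S)$, whereas you restrict to $3$-irreducible components as in Corollary \ref{cor:13} --- harmless, since an irreducible oversemigroup of $S$ of multiplicity less than $3$ has Frobenius number at most $1$ and is therefore redundant in any minimal decomposition.
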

\begin{proof}
The proof follows by noting that $\SG_3(S)=\SG(S)$ when $S \neq \{0, 3, \rightarrow\}$ and then, if a numerical semigroup is
not irreducible (being then $\#\SG(S)=2$) a decomposition of $S$ into irreducible numerical semigroups must consist
of two numerical semigroups with Frobenius number each of the special gaps in $\SG_3(S)$. Furthermore, since those semigroups
must be irreducible, its genus is also fixed. By Corollary 4 in \cite{rosales05}, there is only one numerical semigroup with fixed
genus and Frobenius number. Hence, the decomposition is unique.
\end{proof}
\begin{ex}
\label{ex:15}
Let $S=\langle 3, 23, 40 \rangle$. $S$ is a numerical semigroup with multiplicity $3$ and $\Ap(S, 3)=\{0, 40, 23\}$. Hence, its Kunz-coordinates vector is $x=(\frac{40-1}{3}, \frac{23-2}{3})=(13, 7)$. Since $x$ does not verify any of the conditions of Theorem \ref{theo:10}, $S$ is not irreducible. Then, since $13$ and $7$ are both odd, the minimal decomposition of $S$ into irreducible numerical semigroups is
$$
S= \langle 3,  \frac{3\times 13 +1}{2} \rangle \cap \langle 3, \frac{3\times 7+5}{2}, 3\times 7 +2 \rangle = \langle 3,  20\rangle \cap \langle 3, 13, 23 \rangle
$$
\end{ex}
In \cite{ijac11} it is also defined the notion of $m$-symmetry and $m$-pseudosymmetry of a
  numerical semigroup with multiplicity $m$, extending the previous notions of symmetry
and pseudosymmetry (see \cite{springer}). A numerical semigroup, $S$, with multiplicity
$m$ is $m$\textit{-symmetric} if $S$ is $m$-irreducible and $\F(S)$ is odd. On the other hand, $S$ is $m$\textit{-pseudosymmetric} if $S$ is $m$-irreducible and $\F(S)$ is even.

Another well-known set of numerical semigroups is the one of numerical semigroups that can be decomposed into $m$-symmetric numerical semigroups  (ISYM-semigroups). For the case when $m=3$, clearly, if $S$ is a numerical semigroup with multiplicity $3$ we can distinguish two cases: when $S$ is $3$-irreducible numerical semigroup or when it is not. In the first case, $S$ is a ISYM-semigroup if $S$ is $3$-symmetric, in the second case, $S$ is an ISYM-semigroup if the two $3$-irreducible numerical semigroups in the decomposition (Corollary \ref{cor:13}) are $3$-symmetric. Since the Frobenius numbers of both $3$-irreducible numerical semigroups are known, the elements in $\SG_3(S)$, $S$ is a ISYM-semigroup if all the elements in $\SG_3(S)$ are odd. This results in the following corollaries.

\begin{cor}
\label{cor:16}
Let $S$ be a numerical semigroup with multiplicity $3$ and Kunz-coordinates vector $x=(x_1, x_2) \in \N^2$. Then, $S$ is decomposable as an intersection of symmetric numerical semigroups with multiplicity $3$ if and only if one of the following conditions holds:
\begin{enumerate}
\item $S$ is a $3$-symmetric numerical semigroup.
\item $S$ is not $3$-irreducible, $x_1$ is odd and $x_2$ is even.
\end{enumerate}
\end{cor}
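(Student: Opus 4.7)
The plan is to split on whether $S$ is $3$-irreducible. If it is, then being \emph{decomposable} as an intersection of $3$-symmetric semigroups (allowing the trivial one-element intersection) is equivalent to $S$ being $3$-symmetric itself, which is precisely condition (1). So the substance lies in the non-$3$-irreducible case. There, by Proposition~\ref{prop:11}, the special gaps of $S$ larger than the multiplicity are exactly $h_1 = 3x_1 - 2$ and $h_2 = 3x_2 - 1$.

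For the non-$3$-irreducible case I would exploit the uniqueness of the minimal irreducible decomposition. Any decomposition $S = T_1 \cap \cdots \cap T_n$ into $3$-symmetric semigroups is in particular a decomposition into $3$-irreducible semigroups, so by Lemma~\ref{lemma:7} one can delete redundant $T_i$'s until the remaining family is a minimal decomposition into $3$-irreducibles. Corollary~\ref{cor:14} tells us this minimal decomposition is unique, and Corollary~\ref{cor:13} describes its two components explicitly, with Frobenius numbers $h_1$ and $h_2$ (each $3$-irreducible factor has Frobenius number equal to one element of $\SG_3(S)$). Hence $S$ admits a decomposition into $3$-symmetric semigroups if and only if both of these two canonical components are already $3$-symmetric; and since a $3$-irreducible semigroup is $3$-symmetric exactly when its Frobenius number is odd, this reduces to checking parities of $h_1$ and $h_2$. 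Now $h_1 = 3x_1 - 2$ is odd iff $x_1$ is odd, and $h_2 = 3x_2 - 1$ is odd iff $x_2$ is even, so the conjunction is condition (2).

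The only subtle point I anticipate is the trimming argument: one must justify that from an arbitrary (possibly non-minimal) decomposition into $3$-symmetric pieces, the residue after removing redundant components is forced to coincide with the two factors of Corollary~\ref{cor:13}. This is handled by noting that the Frobenius numbers of the components of any minimal irreducible decomposition are prescribed by $\SG_3(S)$, so by the uniqueness in Corollary~\ref{cor:14} the trimmed family is exactly the one from Corollary~\ref{cor:13}. If these two distinguished factors are symmetric we get the desired decomposition directly, and if either fails to be symmetric then no decomposition of $S$ into $3$-symmetric semigroups is possible. The remaining parity computation is routine.
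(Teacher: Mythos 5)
Your proposal is correct and follows essentially the same route as the paper's (informal) justification: split on whether $S$ is $3$-irreducible, reduce the non-irreducible case to the unique minimal decomposition of Corollary \ref{cor:13} via Corollary \ref{cor:14}, and check the parities of the special gaps $3x_1-2$ and $3x_2-1$. Your trimming argument via Lemma \ref{lemma:7} just makes explicit the ``only if'' step that the paper leaves implicit.
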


An analogous treatment can be done two analyze those numerical semigroups with multiplicity $3$ that can be decomposed as an intersection of $3$-pseudosymmetric numerical semigroups.

\begin{cor}
\label{cor:17}
Let $S$ be a numerical semigroup with multiplicity $3$ and Kunz-coordinates vector $x=(x_1, x_2) \in \N^2$. Then, $S$ is decomposable as an intersection of pseudosymmetric numerical semigroups with multiplicity $3$ if and only if one of the following conditions holds:
\begin{enumerate}
\item $S$ is a $3$-pseudosymmetric numerical semigroup.
\item $S$ is not $3$-irreducible, $x_1$ is even and $x_2$ is odd.
\end{enumerate}
\end{cor}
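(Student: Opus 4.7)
The plan is to mirror the argument sketched immediately before Corollary \ref{cor:16}, replacing the parity condition for symmetry (odd Frobenius number) by that for pseudosymmetry (even Frobenius number). Recall that by definition a 3-pseudosymmetric numerical semigroup is a 3-irreducible one with even Frobenius number, so I need to determine when $S$ admits a decomposition into 3-irreducible oversemigroups all of whose Frobenius numbers are even.

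I would split into the two cases used in Corollary \ref{cor:16}. If $S$ is already 3-irreducible, then the only intersection of 3-irreducible oversemigroups of $S$ equal to $S$ is the trivial one $S=S$, so decomposability as an intersection of 3-pseudosymmetric semigroups is equivalent to $S$ itself being 3-pseudosymmetric; this gives condition (1). If $S$ is not 3-irreducible, then by Lemma \ref{lemma:3} and Proposition \ref{prop:11} we have $\SG_3(S)=\{3x_1-2,\,3x_2-1\}$ with $x_1,x_2\geq 2$, and by Corollary \ref{cor:13} together with Corollary \ref{cor:14}, $S$ admits a unique minimal decomposition $S=S_1\cap S_2$ into 3-irreducible semigroups with $\F(S_1)=3x_1-2$ and $\F(S_2)=3x_2-1$. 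Any decomposition of $S$ into 3-pseudosymmetric (hence 3-irreducible) semigroups can be reduced to a minimal one by discarding redundant components, and by the uniqueness statement of Corollary \ref{cor:14} this minimal reduction must coincide with $S_1\cap S_2$. Therefore the original decomposition contains both $S_1$ and $S_2$, which must themselves be 3-pseudosymmetric.

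It then remains to translate the condition ``both $\F(S_1)$ and $\F(S_2)$ are even'' into a condition on $(x_1,x_2)$: since $3x_1-2\equiv x_1\pmod{2}$ and $3x_2-1\equiv x_2+1\pmod{2}$, this amounts to $x_1$ even and $x_2$ odd, giving condition (2). Conversely, if $x_1$ is even and $x_2$ is odd then $S_1,S_2$ are 3-pseudosymmetric by construction, and $S=S_1\cap S_2$ exhibits the required decomposition.

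The only subtle step is the reduction from an arbitrary (possibly non-minimal) decomposition to the unique minimal one, where one invokes Corollary \ref{cor:14}; beyond that, the proof is a parity bookkeeping exercise parallel to Corollary \ref{cor:16} and should be quite short.
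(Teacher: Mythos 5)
Your proof is correct and follows essentially the same route as the paper: reduce to the unique (minimal) decomposition of Corollary~\ref{cor:13}--\ref{cor:14} and check the parity of the special gaps $3x_1-2\equiv x_1$ and $3x_2-1\equiv x_2+1 \pmod 2$, exactly the argument the paper sketches before Corollary~\ref{cor:16}. The only quibble is the phrase that the trivial decomposition is the \emph{only} one when $S$ is $3$-irreducible (redundant larger components may be adjoined); what you actually need, and correctly conclude, is that $S$ itself must occur among the components.
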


\section{4-irreducible numerical semigroups}
\label{sec:3}
In this section we study the set of irreducible numerical semigroups with multiplicity four. In this case,
\begin{lemma}
\label{lemma:18}
Every $4$-irreducible numerical semigroup different from $\{0, 4, \rightarrow\}$ is irreducible.
\end{lemma}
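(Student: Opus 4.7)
The strategy is to mirror the proof of Lemma \ref{lemma:10}, using Lemma \ref{lemma:1} to reduce to a finite case analysis. Since $S$ is $4$-irreducible, one of the three conditions in Lemma \ref{lemma:1} applies. Case (1) is $S = \{0, 4, \rightarrow\}$, explicitly excluded by hypothesis, and case (3) states that $S$ is already irreducible, so nothing remains to prove. Hence all the substance lies in case (2), where $S = \{x \in \N : x \geq 4, x \neq \F(S)\} \cup \{0\}$.

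For case (2) the Frobenius number is severely constrained. If $\F(S) \geq 8$, then $\F(S) = 4 + (\F(S) - 4)$ writes $\F(S)$ as a sum of two elements of $S$ (both $\geq 4$, and at most one of them can equal $\F(S)$), which is a contradiction. Combined with $\F(S) > \m(S) = 4$, this forces $\F(S) \in \{5, 6, 7\}$, giving exactly three explicit semigroups, namely $\langle 4, 6, 7, 9 \rangle$, $\langle 4, 5, 7 \rangle$ and $\langle 4, 5, 6 \rangle$. For each of them the set of gaps is $\{1, 2, 3, \F(S)\}$, so $\g(S) = 4$.

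I would then verify irreducibility using the standard criterion $\g(S) = \lceil (\F(S)+1)/2 \rceil$. For $\F(S) = 6$ and $\F(S) = 7$ this equality holds ($\lceil 7/2 \rceil = \lceil 8/2 \rceil = 4 = \g(S)$), so the associated semigroups are pseudo-symmetric and symmetric respectively, in particular irreducible, which settles these two subcases.

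The main obstacle is the subcase $\F(S) = 5$: there $\lceil 6/2 \rceil = 3 \neq 4 = \g(S)$, so $\langle 4, 6, 7, 9 \rangle$ is $4$-irreducible but fails to be irreducible (indeed one checks that $\langle 4, 6, 7, 9 \rangle = \langle 2, 7 \rangle \cap \langle 3, 4 \rangle$, an intersection of two semigroups of multiplicity different from $4$). Thus either the exclusion list in the statement must be enlarged to include $\langle 4, 6, 7, 9 \rangle$ alongside $\{0, 4, \rightarrow\}$, or one must supply an additional argument absorbing this semigroup into another case; with that amendment the enumeration above completes the proof.
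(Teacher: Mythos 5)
Your argument follows the same route as the paper's own proof (reduction via Lemma \ref{lemma:1} to a short list of candidates, exactly as in Lemma \ref{lemma:10}), but your enumeration of case (2) is the correct one, and it exposes a genuine error in the paper rather than a gap in your attempt. For multiplicity $4$, the semigroups of the form $\{x \in \N : x \ge 4,\ x \neq \F(S)\} \cup \{0\}$ have $\F(S) \in \{5,6,7\}$ and are exactly $\langle 4,6,7,9\rangle$, $\langle 4,5,7\rangle$ and $\langle 4,5,6\rangle$; the paper's proof instead lists $\langle 4,6,7\rangle$, which is not of this form (it misses both $5$ and $9$ and has Frobenius number $9$), so the problematic $\F(S)=5$ semigroup is silently replaced by an irreducible one. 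Your obstruction is real: $S=\langle 4,6,7,9\rangle$ has $\g(S)=4=\m(S)$, hence is $4$-irreducible by Lemma \ref{lemma:2} (equivalently $\SG_4(S)=\{5\}$, so Lemma \ref{lemma:3} applies; its only proper oversemigroup of multiplicity $4$ is $\{0,4,\rightarrow\}$), yet $\g(S)=4\neq 3=\left\lceil \frac{\F(S)+1}{2}\right\rceil$, and indeed $S=\langle 2,7\rangle \cap \langle 3,4\rangle$, so it is not irreducible. There is therefore no way to ``absorb'' this semigroup into another case: the statement itself must be amended, and your first suggested fix is the right one --- the lemma is true for every $4$-irreducible numerical semigroup different from $\{0,4,\rightarrow\}$ \emph{and} from $\langle 4,6,7,9\rangle$, whose Kunz-coordinates vector is $(2,1,1)$, i.e.\ precisely condition (5) of Corollary \ref{cor:20}. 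Your verification of the cases $\F(S)=6,7$ (pseudo-symmetric and symmetric, respectively) agrees with the paper and is correct.

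One further remark: the error propagates to the sentence following the lemma, which asserts that the set of $4$-irreducible semigroups and the set of irreducible semigroups with multiplicity $4$ differ only in $\widehat{S}=\{0,4,\rightarrow\}$ --- with your counterexample they differ in two elements --- and to any later passage that silently replaces ``$4$-irreducible'' by ``irreducible'' (for instance, a minimal decomposition produced by the results of Section \ref{sec:3} may contain the component with Kunz-coordinates vector $(2,1,1)$, which is $4$-irreducible but not irreducible). With your amended exclusion list, the case analysis you give is complete and the proof goes through.
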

\begin{proof}
Let $S$ be a $4$-irreducible numerical semigroup. By Lemma \ref{lemma:1} one of the following conditions holds:
\begin{enumerate}
\item $S=\{x \in \N: x\ge m, x\neq \F(S)\} \cup \{0\}$. In this case, either $S=\langle 4, 5, 6\rangle$ , $S=\langle 4, 5, 7\rangle$, or $S=\langle 4, 6, 7\rangle$. All of them are irreducible.
\item $S$ is an irreducible numerical semigroup. In this case we are done.
\end{enumerate}

The semigroup $\{0, 4, \rightarrow\}$ is $4$-irreducible but it is not irreducible since $3 = \g(S) \neq \left\lceil\frac{\F(S)+1}{2}\right\rceil = 2$.
\end{proof}

Although the complete set of $4$-irreducible numerical semigroups does not coincide with the set of irreducible numerical semigroups with multiplicity $4$ as in the case when the multiplicity is $3$, the difference is only one element, $\widehat{S}=\{0, 4, \rightarrow\}$, which is closed under decompositions, i.e., it only appears in its own decomposition into $4$-irreducible numerical semigroups. Then, through this section we assume that the numerical semigroups with multiplicity $4$ are different from $\widehat{S}$.

Note also that the above result, is not further true when the multiplicity is greater than $4$. For the case when $m=5$, $\langle 5,6,8,9 \rangle$ is $5$-irreducible but it is not irreducible.

In the following lemma we describe the set of special gaps larger than $4$ of a numerical semigroup with multiplicity $4$.
\begin{lemma}
\label{lemma:19}
Let $S$ be a numerical semigroup with multiplicity $4$ and Kunz-coordinates vector $x=(x_1, x_2, x_3) \in \N^3$. Then, the set of special gaps larger than $4$ is:
{\scriptsize $$
\SG_4(S) = \left\{\begin{array}{cl} \{\} & \mbox{if $x_1=x_2=x_3=1$},\\
\{4x_1-3\} & \mbox{ if $x_1+x_2 \geq x_3+1$, $2x_1\geq x_2+2$, and $x_2+x_3 \leq x_1-1$,}\\
\{4x_1-3\} & \mbox{ if $x_1+x_2 \geq x_3+1$, $2x_1\geq x_2+2$, $x_2=1$ and $2x_3 \leq x_2$,}\\
\{4x_1-3\} & \mbox{ if $x_1+x_2 \geq x_3+1$, $2x_1\geq x_2+2$, $x_2=1$ and $x_2+x_3 \leq x_1-1$,}\\
\{4x_1-3\} & \mbox{ if $x_1=2$, $x_2=x_3=1$,}\\
\{4x_2-2\} & \mbox{ if $x_1+x_2 \geq x_3+1$, $x_2+x_3\geq x_1$, $2x_1\leq x_2+1$ and $2x_3\leq x_2$,}\\
\{4x_3-1\} & \mbox{ if $x_2+x_3 \geq x_1$, $2x_3\geq x_2+1$, and $x_1+x_2 \leq x_3$,}\\
\{4x_1-3, 4x_2-2\} & \mbox{ if $x_1+x_2 \geq x_3+1$, $2x_1\geq x_2+2$, $x_2+x_3\geq x_1$, $2x_3\geq x_2+1$, $x_1\geq 2$, and $x_2\geq 2$}\\
\{4x_1-3, 4x_3-1\} & \mbox{ if $x_1+x_2 \geq x_3+1$, $2x_1\leq x_2+1$, $x_2+x_3\geq x_1$, $2x_3\geq x_2+1$, $x_1\geq 2$, and $x_2\geq 2$}\\
\{4x_1-3, 4x_3-1\} & \mbox{ if $x_2=1$, $x_3 \leq x_1$, $x_3 \geq x_1-1$, $x_1\geq 2$, and $x_3\geq 2$,}\\
\{4x_2-2, 4x_3-1\} & \mbox{ if $x_1+x_2 \geq x_3+1$, $x_2+x_3\geq x_1$, $2x_3\geq x_2+1$, $2x_1\leq x_2+1$, $x_2\geq 2$, and $x_3\geq 2$}\\
\{4x_1-3, 4x_2-2, 4x_3-1\} & \mbox{ if $x_1+x_2 \geq x_3+1$, $2x_1\geq x_2+2$, $x_2+x_3\geq x_1$, $2x_3\geq x_2+1$, $x_2 \geq 2$, and $x_3 \geq 2$.}
\end{array}\right.
$$}
\end{lemma}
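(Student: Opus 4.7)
The approach is to apply the explicit Kunz-coordinate description of the special gaps in equation \eqref{sg} to each of the three candidates $h_i = 4(x_i-1)+i$, $i=1,2,3$, and then enumerate which subsets of $\{h_1,h_2,h_3\}$ can equal $\SG_4(S)$.

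First I would determine, for each of $h_1=4x_1-3$, $h_2=4x_2-2$, $h_3=4x_3-1$, necessary and sufficient conditions on $(x_1,x_2,x_3)$ for $h_i\in\SG_4(S)$. This only requires checking, for every $w_j\in\Ap(S,4)\setminus\{0\}$, that $h_i+w_j\in S$ (the pseudo-Frobenius condition), together with $2h_i\in S$ and the size requirement $h_i>4$. Since $h_i+w_j$ has a fixed residue modulo $4$, membership in $S$ reduces to comparing it with the corresponding $w_k$ and gives a linear inequality in the $x_k$. After simplification I obtain
\begin{align*}
h_1\in\SG_4(S) &\iff x_1\ge 2,\; x_1+x_2\ge x_3+1,\; 2x_1\ge x_2+2,\\
h_2\in\SG_4(S) &\iff x_2\ge 2,\; x_1+x_2\ge x_3+1,\; x_2+x_3\ge x_1,\\
h_3\in\SG_4(S) &\iff x_3\ge 2,\; x_2+x_3\ge x_1,\; 2x_3\ge x_2+1.
\end{align*}

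Next I would go through the $2^3=8$ possible subsets of $\{h_1,h_2,h_3\}$, for each one imposing the above membership criteria on the $h_i$ in the subset and the negation of those criteria on the $h_i$ outside the subset. The negation of the criterion for $h_i$ is a disjunction of three clauses (either $x_i=1$, or the pseudo-Frobenius inequality fails, or the $2h_i$ inequality fails); combined with the affirmative criteria already imposed, this produces the various subcases listed in the statement. For instance, when all three $h_i$ are special gaps, the union of the three sets of inequalities is exactly the last line of the statement, while when only $h_1$ is a special gap, the failure of the criteria for $h_2$ and $h_3$ splits according to whether $x_2=1$, whether $x_3=1$ or $2x_3\le x_2$, and whether $x_2+x_3\le x_1-1$.

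The main obstacle is the bookkeeping of the boundary cases: when some $x_i$ equals $1$, the corresponding $h_i$ is automatically not larger than $4$, which removes $h_i$ from $\SG_4(S)$ and collapses or contradicts several of the disjunctive clauses. The configuration $x_1=2,\,x_2=x_3=1$, for example, does not satisfy the generic condition $x_2+x_3\le x_1-1$ and must be singled out, which accounts for one of the extra subcases of $\SG_4(S)=\{4x_1-3\}$. After discarding empty combinations (such as $x_2=1$ together with $2x_3\le x_2$, which would force $x_3\le 1/2$) and merging coincident regions, what remains is precisely the list of twelve cases displayed in the statement.
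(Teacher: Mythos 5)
Your proposal is correct and follows essentially the same route as the paper, whose entire proof is the one-line remark that the result follows by applying \eqref{sg} to the Kunz-coordinates vector; you simply make that explicit by deriving the membership criteria for each $h_i=4(x_i-1)+i$ (which match what \eqref{sg} gives for $m=4$) and then enumerating the subsets of $\{h_1,h_2,h_3\}$ with the attendant boundary cases. No further comparison is needed, since the paper supplies no more detail than this.
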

\begin{proof}
The result follows by applying  \eqref{sg} to compute the set $\SG_4(S)$ in terms of its Kunz-coordinates vector.
\end{proof}

Also, analogously to the case with multiplicity $3$ we have the following result concerning the irreducibility of a numerical semigroup with multiplicity $4$.
\begin{cor}
\label{cor:20}
Let $S$ be a numerical semigroup with multiplicity $4$ and $\Ap(S, 4)=\{0, 4x_1+1, 4x_2+2, 4x_3+3\}$. Then, $S$ is $4$-irreducible if an only if one of the following conditions holds:
\begin{enumerate}
\item $x_1=x_2=x_3=1$
\item $x_1+x_2 \geq x_3+1$, $2x_1\geq x_2+2$, and $x_2+x_3 \leq x_1-1$,
\item $x_1+x_2 \geq x_3+1$, $2x_1\geq x_2+2$, $x_2=1$ and $2x_3 \leq x_2$,
\item  $x_1+x_2 \geq x_3+1$, $2x_1\geq x_2+2$, $x_2=1$ and $x_2+x_3 \leq x_1-1$,
\item $x_1=2$, $x_2=x_3=1$,
\item $x_1+x_2 \geq x_3+1$, $x_2+x_3\geq x_1$, $2x_1\leq x_2+1$ and $2x_3\leq x_2$,
\item $x_2+x_3 \geq x_1$, $2x_3\geq x_2+1$, and $x_1+x_2 \leq x_3$.
\end{enumerate}
\end{cor}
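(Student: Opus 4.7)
The plan is to deduce Corollary \ref{cor:20} as a direct consequence of Lemma \ref{lemma:3} combined with the exhaustive case analysis of $\SG_4(S)$ already performed in Lemma \ref{lemma:19}. By Lemma \ref{lemma:6}, specifying $\Ap(S,4) = \{0, 4x_1+1, 4x_2+2, 4x_3+3\}$ is the same as specifying the Kunz-coordinates vector $(x_1, x_2, x_3) \in \N^3$ of $S$, so the hypothesis on the Ap\'ery set is just fixing the notation used in the previous lemma.

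First I would invoke Lemma \ref{lemma:3}, which reduces $4$-irreducibility of $S$ to the numerical condition $\#\SG_4(S) \leq 1$. Next I would read off directly from Lemma \ref{lemma:19} the cases in which this cardinality bound is satisfied: the unique case with $\SG_4(S) = \emptyset$ yields condition $(1)$; the four disjoint subcases in which $\SG_4(S) = \{4x_1-3\}$ yield conditions $(2)$--$(5)$; the unique case $\SG_4(S) = \{4x_2-2\}$ yields condition $(6)$; and the case $\SG_4(S) = \{4x_3-1\}$ yields condition $(7)$. The remaining four cases of Lemma \ref{lemma:19}, in which $|\SG_4(S)| \in \{2,3\}$, are precisely the logical complement of the union of conditions $(1)$--$(7)$ and so are correctly excluded from the statement.

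The argument is thus essentially bookkeeping: the seven items of the corollary are in one-to-one correspondence with the seven cases in Lemma \ref{lemma:19} for which $|\SG_4(S)| \leq 1$, and one only has to verify that the inequalities listed in each item match those listed in the lemma verbatim. The main (and minor) obstacle is keeping the various subcases of $\SG_4(S)=\{4x_1-3\}$ straight, since they are distinguished by boundary conditions such as $x_2=1$ or $x_1=2$ with $x_2=x_3=1$; no new computation beyond Lemma \ref{lemma:19} is needed, which justifies stating the result as a corollary rather than a theorem.
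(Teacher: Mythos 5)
Your proposal is correct and matches the paper's (implicit) argument: the paper treats Corollary \ref{cor:20} as an immediate consequence of Lemma \ref{lemma:19} together with the criterion $\#\SG_4(S)\leq 1$ of Lemma \ref{lemma:3}, exactly as in the multiplicity-$3$ case (Theorem \ref{theo:10} from Proposition \ref{prop:11}). Your bookkeeping identification of the seven single-gap (or empty) cases of Lemma \ref{lemma:19} with conditions (1)--(7) is precisely what the paper intends.
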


In the following example we analyze irreducible numerical semigroups with multiplicity four that are generated by generalized arithmetic sequences by applying the above corollary.

\begin{ex}
\label{ex:27}
For $h, d$ positive integers, $k \in \{1,2,3\}$  and such that $\gcd(d, 4)=1$,  $S=\langle 4, 4h+d, 4h+2d,\ldots, 4+kd\rangle$
is said that generated by a generalized arithmetic sequence. In \cite{ramirezalfonsin} it is proved that
$\Ap(S, 4)=\{0, 4h\left\lceil\dfrac{1}{k}\right\rceil + d, 4h\left\lceil\dfrac{2}{k}\right\rceil + 2d, 4h\left\lceil\dfrac{3}{k}\right\rceil + 3d\}$.
Distinguishing the possible values of $k$ and $d \pmod 4$, the Kunz-coordinates vector of $S$ are:
$$
\left\{ \begin{array}{rl}
         (h+D, 2h+2D, 3h+3D) & \mbox{if $k=1$ and $d \equiv 1 \pmod 4$,}\\
         (3h+3D+2, 2h+2D+1, h+D) & \mbox{if $k=1$ and $d \equiv 3 \pmod 4$,}\\
         (h+D, h+2D, 2h+3D) & \mbox{if $k=2$ and $d \equiv 1 \pmod 4$,}\\
         (2h+3D+2, h+2D+1, h+D) & \mbox{if $k=2$ and $d \equiv 3 \pmod 4$,}\\
         (h+D, h+2D, h+3D) & \mbox{if $k=3$ and $d \equiv 1 \pmod 4$,}\\
         (h+3D+2, h+2D+1, h+D) & \mbox{if $k=3$ and $d \equiv 3 \pmod 4$,}\\
        \end{array}\right.
$$
where $D = \frac{d - d \pmod 4}{4}$.

Corollary \ref{cor:20} allows us to decide the irreducibility of those numerical semigroups just by checking in some inequalities are satisfied by the above Kunz-coordinates vectors.
By writting down the inequalities, it is easy to check that when $k=1$ or $k=2$, $S$ is always irreducible, while for $k=3$ the inequalities are never hold so $S$ is not irreducible.
 Furthermore, by Selmer's formulas $\F(S)=4(3h-1) + 3d$ if $k=1$ and $\F(S)=4(2h-1) + 3d$ if $k=2$. Since $d$, in these cases $\F(S)$ is always odd, being then $S$ symmetric.

This result was also proved by Matthews in \cite{mathews04}.
\end{ex}

In case the numerical semigroup is not $4$-irreducible, that is, in one of the cases of Corollary \ref{cor:20}, we can find a decomposition
 into $4$-irreducible numerical semigroups with at least two $4$-irreducible numerical semigroups. If $\#\SG_4(S)=2$, then a minimal decomposition will be given as an intersection of two $4$-irreducible numerical semigroups while if $\#\SG_4(S)=3$, such a decomposition may consists of two or three $4$-irreducible numerical semigroups.

First we analyze the case when the number of special gaps larger than $4$ is two. Then, it is enough to look for two $4$-irreducible numerical semigroups such that each of then has as Frobenius number each of the special gaps.

When the cardinality of $\SG_4(S)$ is two, to decompose $S$ into $4$-irreducible numerical semigroups we have to search for irreducible oversemigroups of $S$ with Frobenius number each of the two special gaps of $S$. 

\begin{theo}
\label{theo:21}
Let $S$ a numerical semigroup with multiplicity $4$ with Kunz-coordinates vector $x = (x_1, x_2, x_3) \in \N^3\backslash\{(1,1,1)\}$, and let $S'$ be an irreducible oversemigrop of $S$ with special gap $h_i=4(x_i-1)+i$. Then, the Kunz-coordinates vector of $S'$ are in the form $x-y \in \N^3$, with $y \in \N$ and such that:
$$
\begin{array}{rl}
y_1=0, y_2 \in [ x_2-x_1, x_2-1] \mbox{ and } y_3=-x_1+x_2+x_3+1-y_2 & \mbox{if i=1}\\
y_1  \in [x_1-x_3+1,x_1-\frac{x_3}{3}], y_2=0, y_3=x_1-x_2+x_3-y_1 & \mbox{if i=3}\\
y_1\in [x_1-x_3+1,x_1-\frac{x_3}{3}], y_2=x_1+x_2+x_3-y_1, y_3=0 & \mbox{if i=3}
\end{array}
$$
\end{theo}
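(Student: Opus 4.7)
The plan is to exploit the bijection between $4$-irreducible oversemigroups of $S$ (encoded by undercoordinates $x-y$) and integer vectors $y$ satisfying three kinds of conditions: an incidence condition (which $y_j$ must vanish so that $h_i$ is a gap of $S'$), a cardinality condition (the genus of $S'$ is prescribed by irreducibility), and the Kunz polytope inequalities $(\ref{polytope:x})$. I would fix $i\in\{1,2,3\}$ and handle each case uniformly.

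First, since $S'$ is irreducible, Lemma \ref{lemma:3} gives $\SG_4(S')=\{\F(S')\}$, so the hypothesis forces $\F(S')=h_i$. By Proposition \ref{prop:9}, the condition $h_i\in \G(S')$ together with $h_i\equiv i\pmod 4$ implies $y_i=0$; moreover, for $h_i$ to be the \emph{largest} gap and not be dominated by $h_{i'}$ with $i'>i$, the same proposition forces $y_{i'}\ge 1$ for $i'>i$. Next, from Lemma \ref{lemma:2} (or equivalently the condition highlighted in Lemma \ref{lemma:8}) irreducibility of $S'$ amounts to
\[
\g(S')=\left\lceil\tfrac{h_i+1}{2}\right\rceil,
\]
which by Selmer's formula $\g(S')=\sum_j (x_j-y_j)$ translates into a single linear equation
\[
y_1+y_2+y_3 \;=\; x_1+x_2+x_3-\left\lceil\tfrac{h_i+1}{2}\right\rceil.
\]
Evaluating the ceiling for $h_1=4x_1-3$, $h_2=4x_2-2$, $h_3=4x_3-1$ gives $2x_1-1$, $2x_2$, $2x_3$ respectively. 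Combining each of these with the vanishing $y_i=0$ directly yields the affine formulas for the last coordinate stated in the three cases: $y_3=-x_1+x_2+x_3+1-y_2$, $y_3=x_1-x_2+x_3-y_1$, $y_2=x_1+x_2-x_3-y_1$.

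Finally, the range of the one remaining free coordinate is obtained by intersecting all the inequalities that a valid Kunz-coordinate vector $x-y$ must satisfy. These are of three types: the positivity bounds $1\le x_j-y_j$ (forcing $y_j\le x_j-1$); the lower bound $y_{i'}\ge 1$ for $i'>i$ coming from the previous paragraph; and the instances of $(\ref{polytope:x})$ for pairs $(j,k)$ with $j+k\ne m$ (the $j+k=m$ instances are vacuous). After substituting $y_i=0$ and the affine relation above, each Kunz inequality collapses to a one-variable linear inequality in the remaining coordinate, and one checks that the binding ones reproduce exactly the endpoints of the intervals $[x_2-x_1,x_2-1]$, $[x_1-x_3+1,\,x_1-\tfrac{x_3}{3}]$ in the three cases, while the non-binding ones are guaranteed by the fact that $x$ itself lies in the Kunz polytope of Lemma \ref{lemma:6}.

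The main obstacle is the last step: the system $(\ref{polytope:x})$ has roughly a dozen instances for $m=4$, and keeping track of which ones remain active after the substitution $y_i=0$ and elimination of a second coordinate is a bookkeeping task rather than a conceptual one. Once one verifies for each $i$ which two inequalities of $(\ref{polytope:x})$ cut the feasible segment and match them against the stated endpoints, the remaining inequalities are either redundant with the positivity bounds or reduce to inequalities already imposed on $x$ by Lemma \ref{lemma:6}, closing the argument.
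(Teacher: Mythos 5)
Your outline follows the paper's proof step for step: force $y_i=0$ via Proposition \ref{prop:9}, impose $\g(S')=\left\lceil\frac{h_i+1}{2}\right\rceil$ via irreducibility and Selmer's formula (your ceiling values $2x_1-1$, $2x_2$, $2x_3$ and the three affine relations are correct), and then cut \eqref{polytope:x} down to a one-variable segment. The genuine gap is that you stop exactly where the content of the theorem lies and assert an outcome that is not what the computation gives. For $i=1$, substituting $y_1=0$ and $y_3=-x_1+x_2+x_3+1-y_2$, the $(3,3)$-instance $2y_3-y_2\le 2x_3-x_2+1$ becomes $y_2\ge x_2-\frac{2x_1-1}{3}$, while $y_3\ge 0$ and $x_3-y_3\ge 1$ become $y_2\le x_2+x_3-x_1+1$ and $y_2\ge x_2-x_1+2$; none of these is ``guaranteed by the fact that $x$ lies in the Kunz polytope,'' and they are in general strictly stronger than the endpoints $x_2-x_1$ and $x_2-1$. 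Concretely, for $x=(13,15,7)$ (the paper's own $S=\langle 4,31,53\rangle$), $y_2=2$ lies in the claimed interval $[2,14]$ but forces $y_3=8$ and third coordinate $7-8=-1$, which is not a Kunz-coordinates vector; so the feasible segment is \emph{not} the stated interval, and your opening claim of a bijection with the stated $y$'s is an overstatement. What does survive is the containment actually asserted by the theorem (every admissible $y_2$ lies in $[x_2-x_1,x_2-1]$, since $x_2-\frac{2x_1-1}{3}\ge x_2-x_1$), but establishing even that containment requires writing out the reduced inequalities and identifying which ones dominate, case by case — precisely the lists the paper displays and analyzes for $k=1,2,3$ — and cannot be dismissed as bookkeeping whose result is ``exactly the endpoints.''

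A secondary error: your auxiliary necessary condition $y_{i'}\ge 1$ for $i'>i$ is false. The ``furthermore'' clause of Proposition \ref{prop:9} only locates $\F(x-y)$ among elements of $\SG_4(S)$, and when $h_{i'}\notin\SG_4(S)$ an irreducible oversemigroup with Frobenius number $h_i$ can have $y_{i'}=0$: again with $x=(13,15,7)$, the vector $y=(0,10,0)$ gives the Kunz-coordinates vector $(13,5,7)$, an irreducible oversemigroup with Frobenius number $49=h_1$ and $y_3=0$. You do not need this condition for the stated bounds, so it can simply be dropped, but it must not be used to supply lower bounds in the reduction.
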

\begin{proof}
Any oversemigroup of $S$ has Kunz-coordinates vector in the form $x^k=x-y^k$, where $y^k \in \N^3$ verifies the inequalities in \eqref{polytope:x} for $m=4$, that is
\begin{align*}
2y_1^k - y_2^k &\leq 2x_1-x_2\\
y_1^k + y_2^k - y_3^k &\leq x_1+x_2-x_3\\
y_2^k + y_3^k - y_1^k& \leq x_1+x_3-x_1+1\\
2y_3^k - y_2 &\leq 2x_3-x_2 +1
\end{align*}
and such that those oversemigroups are irreducible with $\F(x^k)=4(x_k-1)+k$, that is $y^k_k =0$ and
$$
y_1^k+y_2^k+y_3^k = x_1+x_2+x_3 - \left\lceil \frac{4(x_k-1)+k+1}{2} \right\rceil
$$
In what follows we analyze each $y^k$, for $k=1,2,3$:
\begin{enumerate}
\item For $k=1$ the conditions are, by fixing $y_1^1=0$ and $y_3^1 = -x_1+x_2+x_3+1-y_2^1$:
    \begin{align}
y^1_2 &\leq x_2-1\label{1.1}\\
y^1_2 &\geq -x_1+x_2+2\label{1.2}\\
y^1_2 &\geq x_2-2x_1\label{1.3}\\
y^1_2 &\leq x_2\label{1.4}\\
y^1_2 &\geq x_2 - \frac{2x_1+1}{2}\label{1.5}\\
y^1_2 \in \N\label{1.6}
\end{align}

Moreover, constraint \eqref{1.3} is redundant with constraint \eqref{1.5}, and constraint \eqref{1.2} is redundant when imposing \eqref{1.1} and \eqref{1.5}, and also \eqref{1.4} by \eqref{1.1}, so finally the lattice is
$$
y_2 \in [ x_2-\frac{2x_1+1}{2}, x_2-1] \cap \N = [ x_2-x_1, x_2-1] \cap \N
$$

\item For $i=2$,  $y^2_2=0$, and $y_3^2 = x_1-x_2+x_3-y_1^2$, so all the constraints can be written in terms of $y_1^2$. Then, the conditions are:
    \begin{align}
y^2_1 &\leq x_1-1\label{2.1}\\
y^2_1 &\geq x_1-x_2+1\label{2.2}\\
y^2_1 &\leq x_1-\frac{x_2}{2}\label{2.3}\\
y^2_1 &\leq x_1\label{2.4}\\
y^2_1 &\geq x_1 - x_2\label{2.5}\\
y^2_1 &\geq x_1 - \frac{x_2+1}{2}\label{2.6}\\
y^2_1 \in \N\label{2.7}
\end{align}

By an analogous discarding procedure, the above constraints can be written as the integer points inside the interval $[x_1-\frac{x_2+1}{2}, x_1-\frac{x_2}{2}]$.
Then, $h_1$ is in $\G(x-y)$ if and only if $y_1=0$. We can choose $y_1=0$ in the above system if $0 \geq x_1-\frac{x_2+1}{2}$
 (note that $0\geq x_1-\frac{x_2}{2}$ is always true by the conditions of being a Kunz-coordinates vector). Then, the condition is $2x_1 \geq x_2+1$. On the other hand, $h_3 \in \G(x-y)$ if $y_3=0$ is a eligible choice, that is, $x_1-x_2+x_3-y_2 = 0$ is a solution.
This is equivalent to $x_1-x_2+x_3 \leq  x_1-\frac{x_2}{2}$, that is, to $x_2 \geq 2x_3$.

\item Finally, for $h_3=4(x_3-1)+3$, the constraints can be written, by fixing $y_3=0$ and $y_2=x_1+x_2-x_3-y_1$, as:
    \begin{align}
y_1 &\leq x_1-1\label{3.1}\\
y_1 &\geq x_1-x_3+1\label{3.2}\\
y_1 &\leq x_1-\frac{x_3}{3}\label{3.3}\\
y_1 &\geq x_1-x_3\label{3.4}\\
y_1 &\leq x_1+x_3+1\label{3.5}\\
y_1 \in \N\label{3.6}
\end{align}

Whose set of solutions for $y_1$ is $[x_1-x_3+1,x_1-\frac{x_3}{3}] \cap \N$. Then $h_1 \in \G(x-y)$ if and only
 if $0 \geq x_1-x_3 +1$, that is, when $x_3 \geq x_1 + 1$. And $h_2 \in \G(x-y)$ if and only if $x_1+x_2-x_3 \leq x_1-\frac{x_3}{3}$, that is, when $2x_3 \geq 3x_2$.
\end{enumerate}
\end{proof}

As a consequence of the above theorem, minimal decomposition into irreducible numerical semigroups with multiplicity $4$ can be described when the number of special gaps larger than $4$ is two.
\begin{cor}
\label{cor:22}
Let $S$ be a numerical semigroup with multiplicity $4$ and Kunz-coordinates vector $x=(x_1, x_2, x_3) \in \N^3\backslash\{(1,1,1)\}$. Then, if $\SG_4(S)=\{4(x_i-1)+i, 4(x_j-1)+j\}$, a minimal decomposition of $S$ into irreducible numerical semigroups with multiplicity four is
{\small $$
S = \left\{\begin{array}{rl}
\langle 4, 4x_1+ 1, 4(x_2-y_2)+2, 4(x_1-x_2-1+y_2) + 3 \rangle \cap \langle 4, 4(x_1-y_1)+1, 4x_2+2, 4(x_2-x_1+y_1)+3 \rangle & \mbox{ if $i=1$, $j=2$}\\
\langle 4, 4x_1+ 1, 4(x_2-y_2)+2, 4(x_1-x_2-1+y_2) + 3 \rangle \cap \langle 4, 4(x_1-y_3)+1, 4(x_3-x_1+y_3)+2, 4x_3+3 \rangle & \mbox{ if $i=1$, $j=3$}\\
\langle 4, 4(x_1-y_1)+1, 4x_2+2, 4(x_2-x_1+y_1)+3 \rangle \cap \langle 4, 4(x_1-y_3)+1, 4(x_3-x_1+y_3)+2, 4x_3+3 \rangle & \mbox{ if $i=2$, $j=3$}
\end{array}\right.
$$}
with $y_1  \in [x_1-x_3+1,x_1-\frac{x_3}{3}]\cap \N$, $y_2 \in [ x_2-x_1, x_2-1]\cap \N$ and $y_3\in [x_1-x_3+1,x_1-\frac{x_3}{3}] \cap \N$.
\end{cor}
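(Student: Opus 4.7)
The proof is essentially a packaging of Theorem~\ref{theo:21} together with the covering criterion of Lemma~\ref{lemma:7}; the plan is, for each prescribed pair of special gaps, to exhibit a pair of $4$-irreducible oversemigroups of $S$ whose intersection is exactly $S$. Since $\#\SG_4(S)=2$, Lemma~\ref{lemma:3} implies that $S$ is not itself $4$-irreducible, so every decomposition of $S$ into $4$-irreducible numerical semigroups involves at least two factors; consequently, once a decomposition with exactly two factors has been produced, its minimality is automatic.

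For each $\ell \in \{i,j\}$, Theorem~\ref{theo:21} furnishes a $4$-irreducible oversemigroup $S_\ell$ of $S$ whose Kunz-coordinates vector has the form $x-y^\ell$, with $y^\ell$ described explicitly there, and whose Frobenius number is $4(x_\ell-1)+\ell$. Applying the bijection of Lemma~\ref{lemma:6}, namely $(a_1,a_2,a_3)\mapsto \langle 4,\, 4a_1+1,\, 4a_2+2,\, 4a_3+3\rangle$, to each vector $x-y^\ell$ and substituting the closed-form expressions for the components of $y^\ell$ from Theorem~\ref{theo:21}, one recovers the three pairs of generators displayed in the statement, one pair for each choice $(i,j)\in\{(1,2),(1,3),(2,3)\}$. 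The parameter intervals shown are precisely the admissible ranges from Theorem~\ref{theo:21}, and their non-emptiness was already established there, so the oversemigroups genuinely exist.

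To verify $S = S_i \cap S_j$, I would invoke Lemma~\ref{lemma:7}, which reduces the identity to the containment $\SG_4(S) \subseteq \G(S_i) \cup \G(S_j)$. Since $\F(S_\ell)=4(x_\ell-1)+\ell \in \G(S_\ell)$ for $\ell\in\{i,j\}$ by construction, and since $\SG_4(S)=\{4(x_i-1)+i,\, 4(x_j-1)+j\}$ by hypothesis, the containment is immediate; combined with the two-factor lower bound observed above, this shows that the decomposition is minimal.

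The only real obstacle is bookkeeping: tracking the substitution $x-y^\ell$ through the generator formula for each of the three pairs of special gaps and checking that the resulting generators match the displayed expressions. For instance, for $(i,j)=(1,2)$, from $y_3 = -x_1+x_2+x_3+1-y_2$ one reads $x_3 - y_3 = x_1-x_2-1+y_2$, producing the generator $4(x_1-x_2-1+y_2)+3$; the other cases are handled analogously. No new structural argument is needed beyond the tools already in place.
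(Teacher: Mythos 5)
Your proposal is correct and follows essentially the same route as the paper, whose proof is simply the observation that the result follows from Theorem~\ref{theo:21} together with Lemma~\ref{lemma:7}. You merely make explicit the details the paper leaves implicit: the translation of the Kunz-coordinates $x-y^\ell$ into generators via Lemma~\ref{lemma:6}, and the minimality of a two-factor decomposition, which follows from $\#\SG_4(S)=2$ and Lemma~\ref{lemma:3}.
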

\begin{proof}
It follows directly by applying Theorem \ref{theo:21} and Lemma \ref{lemma:7}.
\end{proof}

\begin{ex}
\label{ex:22}
Let $S = \langle 4, 31, 53 \rangle$. $S$ is a numerical semigroup with multiplicity $4$ and $\Ap(S,4)=\{0,53,62,31 \}$. Then, its Kunz-coordinates vector is $x=(\frac{53-1}{4},\frac{62-2}{4}, \frac{31-3}{4}) = (13, 15, 7)$. By Lemma \ref{lemma:19}, the set $\SG_4(S)=\{4\times 13 - 3, 4\times 15 - 2\} = \{49, 58\}$. By Theorem \ref{theo:21}, the decompositions of $S$ into irreducible numerical semigroups with multiplicity $4$ are in the form:
\begin{align*}
S &=\langle 4, 53, 62 - 4y_2, -9 + 4y_2 \rangle \cap \langle 4, 53 - 4y_1,  62, 11 + 4y_1 \rangle
\end{align*}
with $y_1 \in [7, 10] \cap \N$ and $y_2 \in [2, 14]$. For example, taking $y_1=8$ and $y_2=6$, a minimal decomposition of $S$ into irreducible numerical semigroups is given by:
$$
S= \langle 4, 53, 38, 15 \rangle \cap \langle 4, 21, 62, 43 \rangle = \langle 4, 15 \rangle \cap \langle 4, 21, 43 \rangle
$$
\end{ex}

A direct consequence of Theorem \ref{theo:21} is the following result that states the number of minimal decompositions into irreducible numerical semigroups for a numerical semigroup with multiplicity $4$.
\begin{cor}
\label{cor:23}
Let $S$ be a numerical semigroup with multiplicity $4$, with Kunz-coordinates vector $x=(x_1, x_2, x_3) \in \N$ and $\SG_4(S)=\{4(x_i-1)+i, 4(x_j-1)+j\}$. Then, the number of minimal decompositions of
$S$ into irreducible numerical semigroups with multiplicity $4$ is the following:
$$
\left\{\begin{array}{rl}
        \left\lfloor \frac{2x_3}{3} \right\rfloor x_1 & \mbox{if $i=1$ and $j=2$}\\
	\left\lfloor \frac{2x_3}{3} \right\rfloor x_1	    & \mbox{if $i=1$ and $j=3$}\\
	\left\lfloor \frac{2x_3}{3} \right\rfloor^2 & \mbox{if $i=2$ and $j=3$}\\
       \end{array}\right.
$$
where $\lfloor q \rfloor$ is the floor part for any $q \in \Q$.
\end{cor}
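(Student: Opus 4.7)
The plan is to read off the count directly from Theorem~\ref{theo:21}, which enumerates, for each $k\in\{1,2,3\}$, the irreducible oversemigroups of $S$ with multiplicity $4$ whose Frobenius number equals $h_k=4(x_k-1)+k$.

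First I would establish the multiplicative structure: every minimal decomposition $S=S_1\cap S_2$ into irreducible numerical semigroups with multiplicity $4$ satisfies $\{\F(S_1),\F(S_2)\}=\SG_4(S)$. By Proposition~\ref{prop:9}, the Frobenius number of any irreducible proper oversemigroup of $S$ lies in $\SG_4(S)$; and by Lemma~\ref{lemma:7}, neither $S_1$ nor $S_2$ alone can have both elements of $\SG_4(S)$ in its gap set (otherwise $S$ would equal that single summand, contradicting that $S$ is not irreducible when $\#\SG_4(S)=2$). Hence each summand omits exactly one of $h_i,h_j$ from its gap set, which forces $\F(S_1)$ and $\F(S_2)$ to be the two distinct elements of $\SG_4(S)$. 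Since $h_i\neq h_j$, every unordered pair $\{S_1,S_2\}$ is uniquely recovered from the ordered assignment $(\F(S_1),\F(S_2))=(h_i,h_j)$, so the total number of minimal decompositions equals a product $N_i\cdot N_j$, where $N_k$ is the number of irreducible oversemigroups of $S$ with Frobenius $h_k$.

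Next I would read $N_k$ off Theorem~\ref{theo:21}. For $k=1$ the free parameter is $y_2$, ranging over $[x_2-x_1,x_2-1]\cap\N$, an interval containing exactly $x_1$ integers. For $k=2$ and $k=3$ the free parameter is $y_1$, ranging over $[x_1-x_3+1,x_1-x_3/3]\cap\N$; the number of integers in this set equals $\lfloor x_1-x_3/3\rfloor-(x_1-x_3+1)+1=x_3-\lceil x_3/3\rceil$. A short case split on the residue of $x_3$ modulo $3$ shows $x_3-\lceil x_3/3\rceil=\lfloor 2x_3/3\rfloor$, so $N_2=N_3=\lfloor 2x_3/3\rfloor$.

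Multiplying the appropriate counts for each admissible pair $(i,j)\in\{(1,2),(1,3),(2,3)\}$ yields $N_1N_2=N_1N_3=x_1\lfloor 2x_3/3\rfloor$ and $N_2N_3=\lfloor 2x_3/3\rfloor^2$, matching the three lines of the statement. The only mildly delicate step is the arithmetic identity above; everything else follows directly from Theorem~\ref{theo:21} together with the uniqueness of the Frobenius-to-summand assignment, so no serious obstacle arises.
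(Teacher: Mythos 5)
Your proposal is correct and takes essentially the same route as the paper: the paper's one-line proof just counts the integer points of the intervals $[x_2-x_1,\,x_2-1]$ and $[x_1-x_3+1,\,x_1-\frac{x_3}{3}]$ in which the $y$-parameters of Theorem~\ref{theo:21} range, and multiplies the counts for the two special gaps. Your only additions are making explicit the multiplicativity (each minimal decomposition consists of exactly one irreducible oversemigroup per element of $\SG_4(S)$, justified via Lemma~\ref{lemma:7} and Proposition~\ref{prop:9}) and the arithmetic identity $x_3-\lceil x_3/3\rceil=\lfloor 2x_3/3\rfloor$, both of which the paper leaves implicit.
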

\begin{proof}
 The result follows by counting the integer points in $[x_1-x_3+1,x_1-\frac{x_3}{3}]$,
 $[ x_2-x_1, x_2-1]$ and $[x_1-x_3+1,x_1-\frac{x_3}{3}]$, which are the interval where the $y$-variables take values in Theorem \ref{theo:21}.
\end{proof}

\begin{ex}
 \label{ex:24}
For $S= \langle 4, 31, 53 \rangle$ in Example \ref{ex:22}, since the Kunz-coordinates vector of $S$ is $(13,15,7)$ and $\SG_4=\{49, 58\}$, the number of minimal decomposituion in $S$ into irreducible
numerical semigroups with multiplicity $4$ is $\left\lfloor \frac{2\times 7}{3} \right\rfloor \times 13 = 4 \times 13 = 52$, which is the number of integer points inside $[7,10] \times [2, 14]$.
\end{ex}

In what follows, we analyze those numerical semigroups with multiplicity $4$ with three special gaps greater than $4$.
In this case one could find that the number of numerical semigroups involved in a minimal decomposition into $4$-irreducible numerical semigroups is $2$ or $3$.
The following result states when such a numerical semigroup is decomposable into $2$ irreducible numerical semigroups with multiplicity $4$.
\begin{theo}
\label{theo:25}
Let $S$ be a numerical semigroup with multiplicity $4$, with Kunz-coordinates vector $x=(x_1,x_2,x_3) \in \N^3$ and $\#\SG_4(S)=3$. Then, $S$ can be decomposed into two $4$-irreducible numerical semigroups if and only if one of the following conditions holds:
\begin{enumerate}
\item $x_2\leq x_1$, and $S=\langle 4, 4x_1+ 1, 4x_2+2, 4(x_1-x_2-1) + 3 \rangle \cap \langle 4, 4(x_1-y_3)+1, 4(x_3-x_1+y_3)+2, 4x_3+3 \rangle$,
\item $x_1 \geq x_3+2$, and $S=\langle 4, 4x_1+ 1, 4(x_1-x_3-1)+2, 4x_3 + 3 \rangle \cap \langle 4, 4(x_1-y_1)+1, 4x_2+2, 4(x_2-x_1+y_1)+3 \rangle$,
\item $2x_1\leq x_2+1$, and $S=\langle 4, 4x_1+1, 4x_2+2, 4(x_2-x_1)+3 \rangle \cap \langle 4, 4(x_1-y_3)+1, 4(x_3-x_1+y_3)+2, 4x_3+3 \rangle$,
\item $2x_3 \leq x_2$, and $S=\langle 4, 4x_1+ 1, 4(x_2-y_2)+2, 4(x_1-x_2-1+y_2) + 3 \rangle \cap \langle 4, 4(x_1-y_1)+1, 4x_2+2, 4(x_2-x_1+y_1)+3 \rangle$,
\item $x_3 \geq x_1+1$, and $S=\langle 4, 4(x_1-y_1)+1, 4x_2+2, 4(x_2-x_1+y_1)+3 \rangle \cap \langle 4, 4(x_1-y_3)+1, 4(x_3-x_1+y_3)+2, 4x_3+3 \rangle $,
\item $3x_2\leq 2x_3$, and $S=\langle 4, 4x_1+ 1, 4(x_2-y_2)+2, 4(x_1-x_2-1+y_2) + 3 \rangle \cap \langle 4, 4(x_1-y_3)+1, 4(x_3-x_1+y_3)+2, 4x_3+3 \rangle$.
\end{enumerate}
\end{theo}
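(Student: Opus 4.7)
The strategy is to reduce the existence of a $2$-factor decomposition to the existence of a single $4$-irreducible oversemigroup $S_1$ of $S$ whose gap set contains two of the three special gaps $h_i = 4(x_i-1)+i$. Indeed, since $\#\SG_4(S) = 3$, Lemma \ref{lemma:7} forces any decomposition $S = S_1 \cap \cdots \cap S_n$ into $4$-irreducible oversemigroups to satisfy $\SG_4(S) \subseteq \G(S_1) \cup \cdots \cup \G(S_n)$. No single factor can cover all three special gaps, else Lemma \ref{lemma:7} would give $S = S_1$ and $S$ would itself be $4$-irreducible, contradicting $\#\SG_4(S) > 1$. Hence a $2$-factor decomposition exists if and only if one factor covers exactly two of the three gaps, since by Theorem \ref{theo:21} a second factor covering the remaining gap can always be constructed.

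By Proposition \ref{prop:9}, an oversemigroup with Kunz-coordinates $x - y$ covers $h_i$ exactly when $y_i = 0$, and its Frobenius number equals $h_{i^\ast}$ for $i^\ast = \max\{i : y_i = 0\}$. The admissible $S_1$ are therefore indexed by the six combinations of a pair $\{i, j\} \subset \{1,2,3\}$ of covered indices together with the induced choice of Frobenius $h_{\max(i,j)}$; this produces the six cases of the theorem. For each combination I would invoke Theorem \ref{theo:21} with the relevant Frobenius index, impose that the $y$-coordinate of the second covered gap vanishes, and read off the resulting interval-feasibility condition on $(x_1, x_2, x_3)$. The explicit semigroups $S_1$ and $S_2$ then follow from the Kunz-coordinates bijection of Lemma \ref{lemma:6}, with $S_2$ retaining the free parameter $y$ of Theorem \ref{theo:21}, and sufficiency of the decomposition is immediate from Lemma \ref{lemma:7}.

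Concretely, for the pair $\{h_1, h_2\}$ Theorem \ref{theo:21} with $i = 1$ reduces $y_2 = 0 \in [x_2 - x_1, x_2 - 1]$ to $x_2 \leq x_1$ (case (1)), and with $i = 2$ reduces $y_1 = 0 \in [x_1 - \tfrac{x_2+1}{2}, x_1 - \tfrac{x_2}{2}]$ to $2x_1 \leq x_2 + 1$ (case (3)). For $\{h_1, h_3\}$, forcing $y_3 = 0$ in the $i = 1$ case forces $y_2 = x_2 + x_3 - x_1 + 1 \leq x_2 - 1$, i.e., $x_1 \geq x_3 + 2$ (case (2)), while $y_1 = 0$ in the $i = 3$ case gives $x_3 \geq x_1 + 1$ (case (5)). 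For $\{h_2, h_3\}$, analogous substitutions in the $i = 2$ and $i = 3$ systems yield $2x_3 \leq x_2$ (case (4)) and $3x_2 \leq 2x_3$ (case (6)) respectively, via the substitutions $y_1 = x_1 - x_2 + x_3$ and $y_1 = x_1 + x_2 - x_3$ into the respective intervals for $y_1$.

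The principal technical hurdle is verifying, in each of these six interval computations, that the complementary lower-bound constraints from Theorem \ref{theo:21} are automatically implied by the three-gap hypothesis of Lemma \ref{lemma:19}, namely $2x_1 \geq x_2 + 2$, $x_1 + x_2 \geq x_3 + 1$, $x_2 + x_3 \geq x_1$, $2x_3 \geq x_2 + 1$, and $x_2, x_3 \geq 2$. Once this redundancy is established, only the single upper-bound inequality displayed in each of the cases (1)--(6) remains binding, so the six inequalities are collectively necessary and sufficient, and the stated explicit decompositions are obtained by direct substitution of the chosen $y$-coordinates into the Kunz-to-generator correspondence.
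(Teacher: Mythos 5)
Your proposal follows essentially the same route as the paper: reduce the question to the existence of a single $4$-irreducible oversemigroup covering two of the three special gaps, then run the interval analysis of Theorem \ref{theo:21} together with the $y_j=0$ criterion of Proposition \ref{prop:9} for the six choices of (Frobenius gap, additionally covered gap), which yields exactly the six stated conditions and decompositions, with Corollary \ref{cor:22} and Lemma \ref{lemma:7} supplying the second factor and the sufficiency. Your concrete interval computations agree with the paper's (indeed they correct a couple of slips in the paper's prose, e.g.\ $x_1\geq x_3+2$ and $2x_1\leq x_2+1$); only the passing remark that the Frobenius must be $h_{\max(i,j)}$ is at odds with your own six-case treatment, but it does not affect the argument.
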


\begin{proof}
Since $\#SG_4(S)=3$, $\SG_4(S)=\{4(x_1-1)+1, 4(x_2-2)+2, 4(x_3-1)+3\}$. Let us analyze each one of those special gaps:
\begin{enumerate}
\item $h_1=4(x_1-1)+1$.  Then, the semigroup $x-y$ whose Frobenius number is $h_1$
 covers also $h_2$ if $0$ is in $[ x_2-x_1, x_2-1]$, and then $y_2=0$ can be chosen, that is when $0 \geq  x_2-x_1$, or equivalently, when $x_2 \leq x_1$. Then, a minimal decomposition
is that of choosing $y_2=0$, and when $h_1$ and $h_3$ must be covered in Corollary \ref{cor:22}.
However, $x-y$ may not covers $h_2$ but can cover $h_3$, which is equivalent to $y_3=0$, that is possible when $y_3 = -x_1+x_2+x_3+1-y_2 = 0$ can be chosen, or equivalently, when $-x_1+x_2+x_3+1 \in  [ x_2-x_1, x_2-1] \cap \N$, which is the same that
$x_1 \geq x_3+1$. The decomposition is given by fixing $y_3=0$ (or equivalently $y_2 = -x_1+x_2+x_3+1$) in Corollary \ref{cor:22} when $h_1$ and $h_2$ are the special gaps.

\item $h_2=4(x_2-1)+2$. $h_1$ is in $\G(x-y)$ if and only if $y_1=0$. We can choose $y_1=0$ in the interval in Theorem \ref{theo:21} if $0 \geq x_1-\frac{x_2+1}{2}$
 (note that $0\geq x_1-\frac{x_2}{2}$ is always true by the conditions of being a Kunz-coordinates vector). The minimal decomposition follows by fixing $y_1=0$ in Corollary \ref{cor:22} when $h_2$ and $h_3$ are the special gaps.
 Then, the condition is $2x_1 \geq x_2+1$. On the other hand, $h_3 \in \G(x-y)$ if $y_3=0$ is a eligible choice, that is, $x_1-x_2+x_3-y_1 = 0$ is a solution.
This is equivalent to $x_1-x_2+x_3 \leq  x_1-\frac{x_2}{2}$, that is, to $x_2 \geq 2x_3$. Then, the decomposition is obtained by applying Corollary \ref{cor:22} when $y_1=x_1-x_2+x_3$ in the case when $h_1$ and $h_2$ are the special gaps to be covered.

\item Finally, for $h_3=4(x_3-1)+3$.  $h_1 \in \G(x-y)$ if and only
 if, again by Theorem \ref{theo:21}, $0 \geq x_1-x_3 +1$, that is, when $x_3 \geq x_1 + 1$. The decomposition is again obtained by aplying Corollary \ref{cor:22}
 when $h_3$ and $h_2$ are the special gaps. And $h_2 \in \G(x-y)$ if and only
 if $x_1+x_2-x_3 \leq x_1-\frac{x_3}{3}$, that is, when $2x_3 \geq 3x_2$. By Corollary \ref{cor:22}, the minimal decomposition coincided with the one when $y_1=x_1+x_3-x_3$ in the case when
$h_1$ and $h_3$ are the special gaps.
\end{enumerate}
\end{proof}

If none of the conditions of Theorem \ref{theo:25} holds, a minimal decomposition into irreducible numerical semigroups with multiplicity $4$ consists of the intersection of three
$4$-irreducible numerical semigroups. Those three semigroups can be described by choosing Kunz-coordinates vectors such that each of them has as Frobenius number each of the different three
special gaps. These choices are described as the integer points inside the above intervals. We summarize in the following result such a methodology to construct the decomposition.

\begin{theo}
 \label{theo:26}
Let $S$ be a numerical semigroups with multiplicity $4$, Kunz-coordinates vector $x=(x_1, x_2, x_3) \in \N^3$, and such that $\#\SG_4(S)=3$. Then, if none of the conditions of Theorem \ref{theo:25} holds, the  minimal decompositions of  $S$ into irreducible numerical semigroups with multiplicity $4$ is the following:
$$
S= \begin{array}{c}\langle 4, 4(x_1-y_1) +1, 4x_2+ 2, 4(x_2-x_1+y_1)+3 \rangle\\
\cap\\
\langle 4, 4x_1 +1, 4(x_2-y_2)+ 2, 4(x_1-x_2-1+y_2)+3 \rangle\\
\cap\\
\langle 4, 4(x_3-x_2+y_2) +1, 4(x_2-y_3)+ 2, 4x_3+3 \rangle
\end{array}$$
where $y_1 \in [x_1-\frac{x_2+1}{2}, x_1-\frac{x_2}{2}] \cap \N$, $y_2 \in [ x_2-x_1, x_2-1] \cap \N$ and $y_3 \in [x_1-x_3+1,x_1-\frac{x_3}{3}] \cap \N$.
\end{theo}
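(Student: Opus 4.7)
The plan is to proceed in three stages: first argue that any decomposition must contain at least three components, then exhibit three irreducible oversemigroups whose intersection equals $S$, and finally invoke Lemma \ref{lemma:7} to certify the decomposition.

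For the lower bound on the number of components, I would observe that since $\#\SG_4(S)=3$, Lemma \ref{lemma:7} forces any decomposition $S=S_1\cap\cdots\cap S_n$ into $4$-irreducible oversemigroups to satisfy $\SG_4(S)\subseteq \G(S_1)\cup\cdots\cup \G(S_n)$. Moreover, combining Lemma \ref{lemma:8} with Proposition \ref{prop:9}, each $S_i$ in a minimal decomposition contributes exactly one element of $\SG_4(S)$ as its Frobenius number (namely $\F(S_i)\in \SG_4(S)$), and it may additionally cover another special gap $h_j$ precisely when the corresponding $y^i_{j\bmod 4}$ can be chosen to vanish. So if a decomposition with only two components existed, one of those two oversemigroups would have to cover two special gaps simultaneously; but Theorem \ref{theo:25} already lists exactly the six situations where this is possible, and by hypothesis none of them occurs. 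Hence at least three $4$-irreducible oversemigroups are required.

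To exhibit three that work, I would use Theorem \ref{theo:21} once for each special gap $h_i = 4(x_i-1)+i$ with $i\in\{1,2,3\}$. Concretely, for $i=1$ I pick $y_1=0$ and any admissible $y_2\in[x_2-x_1,x_2-1]\cap\N$ (setting $y_3 = -x_1+x_2+x_3+1-y_2$); for $i=2$ I pick $y_2=0$ and any admissible $y_1\in[x_1-\tfrac{x_2+1}{2},x_1-\tfrac{x_2}{2}]\cap\N$ (setting $y_3 = x_1-x_2+x_3-y_1$); for $i=3$ I pick $y_3=0$ and any admissible $y_1\in[x_1-x_3+1,x_1-\tfrac{x_3}{3}]\cap\N$ (setting $y_2 = x_1+x_2-x_3-y_1$). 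Translating each Kunz-coordinates vector back into a system of generators via the bijection $(x_1,x_2,x_3)\mapsto\langle 4,4x_1+1,4x_2+2,4x_3+3\rangle$ yields exactly the three factors written in the statement.

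Finally, I would certify that the intersection of these three factors recovers $S$ by applying Lemma \ref{lemma:7}: each $S_i$ contains $h_i$ in its gap set by construction (since $\F(S_i)=h_i$), so $\SG_4(S)=\{h_1,h_2,h_3\}\subseteq \G(S_1)\cup\G(S_2)\cup\G(S_3)$, which is precisely the criterion for $S=S_1\cap S_2\cap S_3$. Combined with the lower bound from the first paragraph, this is a minimal decomposition.

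The main obstacle I anticipate is bookkeeping rather than conceptual: one must verify that the intervals in which the parameters $y_1,y_2,y_3$ live are nonempty under the standing hypothesis that none of the six cases of Theorem \ref{theo:25} applies (otherwise one of the three oversemigroups would collapse or coincide with another), and one must confirm that the dependent coordinates derived from the relations $y^i\cdot\mathbf{1}=\sum x_j-\lceil(h_i+1)/2\rceil$ land in $\N$. Both checks reduce to re-reading the inequalities in the proof of Theorem \ref{theo:21}, so the argument is complete once that routine verification is carried out.
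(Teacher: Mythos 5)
Your overall plan (rule out two factors via Theorem \ref{theo:25} and the fact that $\#\SG_4(S)=3>1$, build one factor per special gap from Theorem \ref{theo:21}, certify with Lemma \ref{lemma:7}) is exactly the route the paper intends; the paper in fact prints this theorem with no formal proof beyond the preceding paragraph. The problem is that the two points you defer as ``bookkeeping'' are precisely where the argument fails as written.

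First, your claim that translating the Kunz-coordinates vectors of Theorem \ref{theo:21} into generators ``yields exactly the three factors written in the statement'' is false for the third factor. The $i=3$ case of Theorem \ref{theo:21} produces $\langle 4, 4(x_1-y_3)+1, 4(x_3-x_1+y_3)+2, 4x_3+3\rangle$ with $y_3\in[x_1-x_3+1,x_1-\frac{x_3}{3}]$, whereas the statement prints $\langle 4, 4(x_3-x_2+y_2)+1, 4(x_2-y_3)+2, 4x_3+3\rangle$, which even involves the parameter $y_2$ of the second factor; the two agree only when $y_2=y_3$. So either you flag and correct the printed formula or your construction does not prove the displayed identity. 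The discrepancy is not cosmetic: in the paper's own example with $x=(5,4,5)$, $y_2=2$, $y_3=3$, the printed third factor is $\langle 4,6,13,23\rangle=\langle 4,6,13\rangle$, which contains $6+13=19=4(x_3-1)+3$, hence fails to cover the special gap it is supposed to miss. Second, the deferred verification that every parameter in the stated ranges gives a genuine $4$-irreducible oversemigroup is not routine, because the intervals inherited from Theorem \ref{theo:21} do not force the dependent coordinate of $x-y$ to stay $\geq 1$. For the factor with Frobenius number $4(x_1-1)+1$ one needs $x_1-x_2-1+y_2\geq 1$, i.e.\ $y_2\geq x_2-x_1+2$, which is strictly stronger than the stated bound $y_2\geq x_2-x_1$; and since the failure of condition (1) of Theorem \ref{theo:25} gives $x_2\geq x_1+1$, the values $y_2=x_2-x_1$ and $y_2=x_2-x_1+1$ always lie in the printed interval and yield a third coordinate of $-1$ or $0$, i.e.\ no valid factor at all. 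A complete proof must therefore tighten (or re-derive under the standing hypotheses) the admissible ranges for $y_1,y_2,y_3$ before Lemma \ref{lemma:7} can be applied to every member of the asserted family, rather than citing Theorem \ref{theo:21} wholesale.
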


The following example illustrates the usage of the above result.
\begin{ex}
Let $S = \langle 4,21,18,23 \rangle$. $S$ is a numerical semigroup with multiplicity $4$ and its Kunz-coordinates vector is $x=(5,4,5)$. The set of special gaps greater than four of $S$ is $\SG_4(S)=\{17, 14, 19\}$. It is easy to check that $x$ does not verify any of the conditions of Theorem
\ref{theo:25}, and then, a minimal decomposition of $S$ into $4$-irreducible numerical semigroups involves $3$ semigroups. To compute one of those minimal decompositions, we apply Theorem \ref{theo:26} that
gives us directly the decomposition as:
$$
S= \langle 4, 21-y_1, 18, 4y_1-1 \rangle \cap \langle 4, 21, 18 - 4y_2, 4y_2+3 \rangle \cap \langle 4, 5+4y_2, 18-4y_3, 23 \rangle
$$
for any $y_1 \in [3,3] \in \N$, $y_2 \in [ 0, 3] \cap \N$ and $y_3 \in [1,3] \cap \N$. For instance, for $y_1=3$, $y_2=2$ and $y_3=3$ we have the decomposition
$$
S= \langle 4, 18, 18, 11 \rangle \cap \langle 4, 21, 10, 11\rangle \cap \langle 4, 13, 6, 23 \rangle = \langle 4, 11, 18 \rangle \cap \langle 4, 10, 11\rangle \cap \langle 4, 6, 13\rangle
$$
\end{ex}

Finally, we analyze with our approach the $4$-symmetry and $4$-pseudosymmetry of a numerical semigroup with multiplicity $4$. Let
$S \neq \{0, 4 , \rightarrow\}$ be a numerical semigroup with $\m(S)=4$ and Kunz-coordinates vector
$x=(x_1, x_2, x_3) \in \N^3$. If $S$ is irreducible, then, $\SG_4=\{4(x_i-1)+i\}$ for some $i \in \{1,2,3\}$, so $\F(S)=4(x_i-1)+i$.
 Hence, $S$ is symmetric if and only if $i$ is even, or equivalently, if $i =1, 3$, and $S$ is pseudosymmetric if and only if $i=2$.

If $S$ is not irreducible and $\SG_4(S)=\{4(x_i-1)+i, 4(x_j-1)+j\}$, $S$ is decomposable into symmetric numerical semigroups with
 multiplicity $4$ if and only $i=1$ and $j=3$, while $S$ is never decomposable as an intersection of pseudosymmetric numerical semigroups.
If $\#\SG_4(S)=3$, then, $S$ is decomposable into symmetric numerical semigroups if condition \emph{ 1.} or \emph{ 6.} in Theorem \ref{theo:25} is satisfied. Note that these conditions are hold if $S$ is decomposed into one numerical
semigroup with Frobenius number $4(x_1-1)+1$ and other with Frobenius number $4(x_3-3)+3$, covering some of them the even special
gap $4(x_2-1)+2$. Clearly, in this case $S$ can be never decomposed into pseudosymmetric numerical semigroups.

\end{document}